%
%
%

\documentclass{amsart}
\usepackage[T1]{fontenc}
\usepackage{hyperref}
\usepackage{texdraw}

\setcounter{tocdepth}{1}

\makeatletter

\newfont{\gothic}{eufm10 scaled 1100}

\theoremstyle{plain}    
\newtheorem{thm}{Theorem}[section]
\numberwithin{equation}{section} 
\numberwithin{figure}{section} 
\theoremstyle{plain}    
\theoremstyle{plain}    
\theoremstyle{plain}    
\theoremstyle{plain}
\newtheorem{lem}[thm]{Lemma} 
\theoremstyle{plain}    
\newtheorem{prop}[thm]{Proposition} 
\theoremstyle{plain}    
\newtheorem{Def}[thm]{Definition} 
\theoremstyle{remark}
\newtheorem{rem}[thm]{Remark}
\theoremstyle{remark}


\sloppy
\binoppenalty=10000
\relpenalty=10000

\usepackage[arrow,matrix,curve,ps]{xy}
\xyoption{dvips}
\CompileMatrices


\makeatother

\begin{document}

\title{An asymptotic version of Dumnicki's algorithm for linear systems in $\mathbb{CP}^2$}

\date{\today}

\author{Thomas Eckl}

\keywords{Seshadri constants, Nagata conjecture}

\subjclass{14J26, 14C20}


\address{Thomas Eckl, Department of Mathematical Sciences, The University of Liverpool, Mathematical
               Sciences Building, Liverpool, L69 7ZL, England, U.K.}

\email{thomas.eckl@liv.ac.uk}

\urladdr{http://pcwww.liv.ac.uk/~eckl/}

\maketitle

\begin{abstract}
Using Dumnicki's approach to showing non-specialty of linear systems consisting of plane curves with 
prescribed multiplicities in sufficiently general points on $\mathbb{P}^2$ we develop an asymptotic method 
to determine lower bounds for Seshadri constants of general points on $\mathbb{P}^2$. With this method we 
prove the lower bound $\frac{4}{13}$ for $10$ general points on $\mathbb{P}^2$.
\end{abstract}


\pagestyle{myheadings}
\markboth{THOMAS ECKL}{AN ASYMPTOTIC VERSION OF DUMNICKI'S ALGORITHM}

\setcounter{section}{-1}

\section{Introduction}

\noindent A celebrated conjecture of Nagata~\cite{Nag59} predicts that every curve in 
$\mathbb{P}^2 = \mathbb{CP}^2$ going through $r > 9$ 
very general points with multiplicity at least $m$ has degree $d \geq \sqrt{r} m$. Cast in the language of 
Seshadri constants, Nagata claimed in effect that
\[ H - \sqrt{\frac{1}{r}} \sum_{j=1}^r E_j \]
is a nef divisor on $\widetilde{X} = \mathrm{Bl}_r(\mathbb{P}^2)$, the blowup of $\mathbb{P}^2$ in the $r$ points,
where $H$ is the pullback of a line in $\mathbb{P}^2$ and $E_j$ are the exceptional divisors over the blown up 
points.

\noindent It is well known that Nagata's conjecture is implied by another 
conjecture of Harbourne and Hirschowitz about spaces $\mathcal{L}_d(m^r)$ of 
plane curves of given degree $d$ and multiplicity at least $m$ at $r$ general
points \cite{Mir99, CilMir01}. This conjecture tries to detect those of the 
spaces $\mathcal{L}_d(m^r)$ which do not have the expected dimension
\[ \max (-1, \frac{d(d+3)}{2} - r \cdot \frac{m(m+1)}{2}). \]

\noindent In~\cite{Eck05b} the author showed that it is not necessary to know all cases of the 
Harbourne-Hirschowitz conjecture in order to prove Nagata's conjecture:
\begin{thm}[\cite{Eck05b},Thm.5.1] \label{Eckl-thm}
Let $r > 9$ be an integer and $(d_i, m_i)$ a sequence of pairs of positive 
integers such that 
$\frac{d_i^2}{m_i^2 \cdot r} 
 \stackrel{i \rightarrow \infty}{\longrightarrow} \frac{1}{a^2} \geq 1$ and the
space 
$\mathcal{L}_{d_i}((m_i+1)^r)$ has expected dimension $\geq 0$. Then 
\[ H - a \cdot \sqrt{\frac{1}{r}} \sum_{j=1}^r E_j \]
is nef on $\widetilde{X}$. In particular, Nagata's
conjecture is true for $r$ general points in $\mathbb{P}^2$, if $a = 1$.
\end{thm}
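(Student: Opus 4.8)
\medskip
\noindent\textbf{Proof sketch.}
The plan is to verify the definition of nefness for $D_a:=H-a\sqrt{1/r}\sum_{j=1}^rE_j$, i.e.\ that $D_a\cdot C\ge 0$ for every irreducible curve $C\subset\widetilde X$. Since $D_a\cdot E_j=a\sqrt{1/r}>0$, one may assume $C$ is the strict transform of an irreducible plane curve $\bar C$ of degree $d\ge 1$ with multiplicities $m_j:=\mathrm{mult}_{p_j}\bar C$, and the inequality to prove becomes $d\ge a\sqrt{1/r}\sum_jm_j$. By the Cauchy--Schwarz inequality $(\sum_jm_j)^2\le r\sum_jm_j^2$, so any $C$ with $C^2=d^2-\sum_jm_j^2\ge 0$ satisfies $d\ge\sqrt{1/r}\sum_jm_j\ge a\sqrt{1/r}\sum_jm_j$ (using $a\le 1$). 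Hence it suffices to treat curves with $C^2<0$, and for those I would intersect with the plane curves supplied by the hypothesis.

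Since imposing multiplicity $\ge m_i+1$ at a point is at most $\binom{m_i+2}{2}$ linear conditions on degree-$d_i$ forms, one has $\dim\mathcal L_{d_i}((m_i+1)^r)\ge\frac{d_i(d_i+3)}2-r\frac{(m_i+1)(m_i+2)}2$, which by hypothesis equals the (non-negative) expected dimension; so $\mathcal L_{d_i}((m_i+1)^r)\neq\emptyset$, and there is an effective plane curve $\bar C_i$ of degree $d_i$ with $\mathrm{mult}_{p_j}\bar C_i\ge m_i+1$ for all $j$. I would also record that $\frac{d_i^2}{m_i^2r}\to\frac1{a^2}$ is equivalent to $\frac{m_i+1}{d_i}\to\frac a{\sqrt r}$ (the case of bounded $m_i$ being elementary and giving a strict inequality).

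The core of the argument is B\'ezout. Write $\bar C_i=k_i\bar C+\bar R_i$ with $k_i\ge 0$ maximal (so that $\bar C$ is not a component of $\bar R_i$): then $\bar R_i$ is effective, has degree $d_i-k_id$, and $\mathrm{mult}_{p_j}\bar R_i\ge(m_i+1)-k_im_j$. Intersecting $\bar C$ with $\bar R_i$ (or, when $\bar R_i=0$, arguing directly from $\mathrm{mult}_{p_j}\bar C_i=k_im_j\ge m_i+1$) gives in all cases
\[ d\,d_i-(m_i+1)\sum_jm_j\;\ge\;k_i\Bigl(d^2-\sum_jm_j^2\Bigr)=k_i\,(C\cdot C). \]
If the $\bar C_i$ can be chosen with $k_i\le 1$ for infinitely many $i$ --- in particular whenever $\bar C$ is not a fixed component of $\mathcal L_{d_i}((m_i+1)^r)$ of multiplicity $\ge 2$ --- the right-hand side is bounded below by the constant $C^2$; dividing by $d_i$ and letting $i\to\infty$ yields $D_a\cdot C=d-a\sqrt{1/r}\sum_jm_j\ge 0$. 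Taking $a=1$ here gives Nagata's conjecture, since nefness of $H-\sqrt{1/r}\sum E_j$ is precisely its content.

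The main obstacle is the remaining case: $\bar C$ occurs with multiplicity $k_i\ge 2$ in every member of $\mathcal L_{d_i}((m_i+1)^r)$ for all large $i$ (necessarily then $C^2<0$). Now the displayed inequality only gives $d\,d_i-(m_i+1)\sum_jm_j\ge k_iC^2$ with $C^2<0$ and $k_i$ possibly of order $d_i$, so the limit no longer forces $D_a\cdot C\ge 0$; one must separately bound $k_i$, for instance by intersecting the residual curves $\bar R_i$ --- which still carry almost the full multiplicity $m_i+1$ at the $p_j$ yet have far smaller degree --- with lines, conics, and the pencils of cubics through suitable subsets of the $p_j$, and playing this against the asymptotics $\frac{d_i^2}{m_i^2r}\to\frac1{a^2}\ge 1$; this is exactly where the hypothesis $r>9$ enters, since for $r\le 9$ the relevant auxiliary curves impose no such constraint. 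I expect this estimate, rather than any conceptual point, to be the hard part. A possibly cleaner packaging: rescaling the effective classes $d_iH-(m_i+1)\sum_jE_j$ and passing to the limit inside the closed pseudoeffective cone already shows that $D_a$ is pseudoeffective, after which ``pseudoeffective $\Rightarrow$ nef'' for $D_a$ can be extracted from its invariance under permuting the $E_j$ together with uniqueness of the Zariski decomposition.
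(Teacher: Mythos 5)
This theorem is imported: the paper states it with the citation \cite{Eck05b} (Thm.\ 5.1 there) and gives no proof of its own, so there is no in-paper argument to compare you against. What can be compared is your blind attempt with what is known about the cited proof and, more importantly, whether your attempt actually closes.

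Your first reduction is sound and standard: $D_a\cdot E_j>0$, and for an irreducible $C$ with $C^2\ge 0$ Cauchy--Schwarz together with $a\le 1$ gives $D_a\cdot C\ge 0$. The pseudoeffectivity observation is also correct: dividing the effective classes $d_iH-(m_i+1)\sum E_j$ by $d_i$ and passing to the limit (using $\tfrac{m_i+1}{d_i}\to\tfrac{a}{\sqrt r}$) shows $D_a$ lies in the closed pseudoeffective cone. The B\'ezout inequality you derive, $d\,d_i-(m_i+1)\sum_j m_j\ge k_i C^2$, is correct.

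The gap you flag is, however, genuine and is not resolved anywhere in your sketch. When $\bar C$ has $C^2<0$ and sits in the fixed part of $\mathcal L_{d_i}((m_i+1)^r)$ with multiplicity $k_i$, the inequality $L_i\cdot C\ge k_iC^2$ is useless unless you can show $k_i=o(d_i)$: even the crude bound $k_i\le d_i/d$ gives, after dividing by $d_i$, only $D_a\cdot C\ge C^2/d<0$ in the limit. The expected-dimension hypothesis does not obviously supply a sublinear bound on $k_i$ either (running $\dim|L_i-tC|\ge$ expected dimension for $0\le t\le k_i$ produces a \emph{lower} bound on $k_i$ when $L_i\cdot C<0$). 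Your suggestion of intersecting the residuals $\bar R_i$ with lines, conics, and cubics, and ``playing this against $r>9$'' is where the whole difficulty lives, and it is not carried out. Your fallback --- pseudoeffective together with $S_r$-invariance and uniqueness of the Zariski decomposition --- also does not close: uniqueness forces the negative part $N$ to be $S_r$-invariant as a class, but that is perfectly compatible with $N\ne 0$ (for instance $N$ could be supported on an entire $S_r$-orbit of negative curves). Symmetry of $N$ gives $N=nH-n'\sum E_j$, and from $D_a\cdot N=N^2$ one only gets $P^2=(1-a^2)-N^2\ge 0$, which is no contradiction. So neither route in your proposal reaches nefness.

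As for the route the author actually takes in \cite{Eck05b}: the title ``Seshadri constants via Lelong numbers'' signals an analytic argument quite different from yours. The effective divisors in $\mathcal L_{d_i}((m_i+1)^r)$ are rescaled to positive closed $(1,1)$-currents with controlled Lelong numbers at the $p_j$, one passes to a weak limit, and the resulting limit current (with its Siu decomposition and a Demailly-type regularization keeping track of Lelong numbers) is used to produce the nef class. The extra ``$+1$'' in the multiplicity is what provides the slack that makes the limiting argument work. That mechanism replaces entirely the case analysis over negative curves that your B\'ezout approach runs into.
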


\noindent In this paper we want to use Dumnicki's Reduction Algorithm \cite{DJ05, D06} to prove the 
non-specialty of linear systems $\mathcal{L}_d(m^r)$, as needed in the theorem. Dumnicki's new idea was to
consider linear systems of curves not only going through certain points with at least certain multiplicities, but
also the curve equation should contain only monomials from a certain subset of all monomials of degree 
$\leq d$. He was able to give non-specialty criteria for such linear systems, including the following:
\begin{prop}[\textbf{Dumnicki's non-specialty criterion}]
Let $m \in \mathbb{N}$ and let $D \subset \mathbb{N}^2$ such that $\# D = \left( \begin{array}{c} 
                                                                                 m+1\\ 2         
                                                                                 \end{array}\right)$.
Consider the linear system $L = \mathcal{L}_D(m)$ of those curve equations 
$\sum_{(\alpha,\beta) \in D} c_{\alpha,\beta} x^\alpha y^\beta$, $c_{\alpha,\beta} \in \mathbb{C}$, which 
pass through a given point with multiplicity at least $m$. Then $L$ is non-special if and only if the points in $D$
do not lie on a curve of degree $m-1$ in $\mathbb{R}^2$.

\noindent In particular, $L$ is non-special if there are $m$ parallel lines $l_1, \ldots, l_m$ containing 
$1, \ldots, m$ points in $D$.
\end{prop}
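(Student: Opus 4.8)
The plan is to work on the blow-up $\pi\colon Y\to\mathbb{P}^2$ at the single prescribed point $p$, where the linear system $L=\mathcal{L}_D(m)$ is the linear subsystem of $|dH|$ (for $d$ large enough that $D$ fits) spanned by the monomials $x^\alpha y^\beta$ with $(\alpha,\beta)\in D$, further restricted to vanish to order $m$ at $p$. Non-specialty means that the vanishing-to-order-$m$ conditions are independent on the span $V_D$ of these monomials, i.e. the natural evaluation map $V_D\to\mathcal{O}_{Y}/\mathfrak{m}_p^{m}$ (equivalently, the map to the jet space $J^{m-1}_p$, which has dimension $\binom{m+1}{2}=\#D$) is an isomorphism, hence injective. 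Thus the statement reduces to a purely linear-algebraic claim about a square matrix, after which the "in particular" clause will be a combinatorial corollary.

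First I would fix coordinates so that $p$ is the origin in an affine chart, and identify the $(m-1)$-jet of $f=\sum_{(\alpha,\beta)\in D}c_{\alpha,\beta}x^\alpha y^\beta$ at $p$ with the list of its partial derivatives $\partial_x^i\partial_y^j f(0)$ for $i+j\le m-1$. Expanding, the evaluation matrix $M$ has rows indexed by pairs $(i,j)$ with $i+j\le m-1$ and columns indexed by $(\alpha,\beta)\in D$, with entry essentially a product of falling factorials in $\alpha,\beta$; up to nonzero scaling this is exactly the evaluation of the monomial basis $\{X^iY^j : i+j\le m-1\}$ of polynomials of degree $\le m-1$ at the points of $D$. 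Hence $\det M\ne 0$ if and only if no nonzero polynomial of degree $\le m-1$ vanishes on all of $D$, which is precisely the condition that the points of $D$ do not lie on a curve of degree $m-1$ in $\mathbb{R}^2$ (note the points of $D$ being in $\mathbb{N}^2\subset\mathbb{R}^2$, and a real polynomial vanishing on them is the same as a complex one by taking real/imaginary parts). This is the heart of the argument; the main obstacle is simply bookkeeping the falling-factorial coefficients carefully enough to see they are nonzero and factor out cleanly, so that the determinant of $M$ and the determinant of the Vandermonde-type evaluation matrix differ only by a nonzero constant.

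Finally, for the "in particular" clause I would argue that the existence of $m$ parallel lines $l_1,\dots,l_m$ with $l_k$ containing exactly $k$ points of $D$ forces $D$ to not lie on any degree-$(m-1)$ curve. Choose an affine linear form $\ell$ that is constant on each $l_k$ and takes $m$ distinct values $t_1,\dots,t_m$ on them; since the $l_k$ are parallel, such $\ell$ exists. Suppose a real polynomial $g$ of degree $\le m-1$ vanishes on all of $D$. Restricting coordinates along the lines, $g$ restricted to $l_k$ is a one-variable polynomial of degree $\le m-1$ in the coordinate along $l_k$; but on $l_m$ it has $m$ distinct roots (the $m$ points of $D$ on $l_m$), so $g|_{l_m}\equiv 0$. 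Then $g$ is divisible by the form defining $l_m$, i.e. $g=(\ell-t_m)g_1$ with $\deg g_1\le m-2$; now $g_1$ vanishes on the $m-1$ points of $D\cap l_{m-1}$ (since $\ell-t_m$ is nonzero there), and by induction on $m$ one peels off the lines $l_m,l_{m-1},\dots,l_2$ and concludes $g_1\cdots\equiv 0$, hence $g\equiv 0$. Therefore $D$ imposes no relation of degree $\le m-1$, and the first part applies. The only thing to watch here is the base case and the fact that $\sum_{k=1}^m k=\binom{m+1}{2}=\#D$ so that after the $m$ lines are accounted for, every point of $D$ has been used — which is exactly the cardinality hypothesis.
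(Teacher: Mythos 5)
The paper does not actually prove this proposition: it cites \cite[Prop.\,12]{D06} for the main equivalence and invokes B\'ezout for the ``in particular'' clause. Your proposal reconstructs both arguments, and in essence it is the same approach Dumnicki takes --- the square interpolation matrix for the single fat point is, after factoring out row and column scalars, the Vandermonde-type matrix evaluating all monomials of degree $\le m-1$ at the points of $D$, and the falling-factorial entries are upper-triangularly related to $\alpha^i\beta^j$ so the two determinants coincide. Your peeling argument for the corollary is likewise the expected B\'ezout induction, and the cardinality check $\sum_{k=1}^m k = \binom{m+1}{2} = \#D$ does exactly the bookkeeping needed for the lines to exhaust $D$.

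There is one slip that would break the computation as literally written: you say ``fix coordinates so that $p$ is the origin,'' but the origin is precisely the degenerate choice for which the identification with the Vandermonde matrix fails. At $p=(0,0)$ the derivative $\partial_x^i\partial_y^j(x^\alpha y^\beta)$ evaluates to zero unless $(i,j)=(\alpha,\beta)$, so any column of $M$ indexed by $(\alpha,\beta)\in D$ with $\alpha+\beta\ge m$ would be identically zero and the matrix would be singular regardless of $D$. The factoring you describe --- pulling $a^{\alpha}b^{\beta}$ out of the columns and $a^{-i}b^{-j}$ out of the rows to reduce $\det M$ to a nonzero constant times $\det\bigl((\alpha)_i(\beta)_j\bigr)$ --- requires $p=(a,b)$ to have both coordinates nonzero. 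The correct formulation is that $\det M$ is a polynomial in $(a,b)$ which is a nonzero multiple of $a^{N_1}b^{N_2}\det V$, where $V_{(i,j),(\alpha,\beta)}=\alpha^i\beta^j$; hence it vanishes identically iff $\det V=0$, i.e.\ iff $D$ lies on a curve of degree $m-1$, and for a general (or any nondegenerate) choice of $p$ the conditions are independent exactly in the complementary case. Since ``non-special'' in this context is a statement about a general point, the conclusion is unaffected, but you should replace ``origin'' by ``a point with nonzero affine coordinates'' (or ``a general point'') for the argument to stand.
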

\begin{proof}
See \cite[Prop.12]{D06}. The last statement follows from B\'ezout's Theorem.
\end{proof}

\noindent Furthermore, Dumnicki devised a recursive procedure showing the non-specialty of linear systems
$\mathcal{L}_D(m_1, \ldots, m_r)$ if it terminates in the correct way:
\begin{thm}[\textbf{Dumnicki's reduction algorithm}]
Let $m_1, \ldots, m_{p-1}, m_p \in \mathbb{N}^\ast$, let $D \subset \mathbb{N}^2$, and let 
\[ F: \mathbb{R}^2 \ni (a_1,a_2) \mapsto r_0 + r_1 a_1 + r_2 a_2 \in \mathbb{R},\ r_0, r_1, r_2 \in \mathbb{R}, \]
be an affine function. Let
\[ \begin{array}{rcl} 
   D_1 & := & \left\{ (a_1,a_2) \in D | F(a_1, a_2) < 0 \right\}, \\
   D_2 & := & \left\{ (a_1,a_2) \in D | F(a_1, a_2) > 0 \right\}. 
   \end{array} \]
If $D_1 \cup D_2 = D$ and $L_1 := \mathcal{L}_D(m_1, \ldots, m_{p-1})$ is non-special of dimension $\geq 0$, 
$L_2 := \mathcal{L}_D(m_p)$ is non-special of dimension $-1$, then $\mathcal{L}_D(m_1, \ldots, m_p)$ is
non-special of dimension $\geq 0$. 
\end{thm}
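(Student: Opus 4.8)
\noindent The plan is to translate the statement into a maximal‑rank condition for a ``collision matrix'' and then to exploit the cut $\{F=0\}$ by degenerating the last point $P_p$ along the one‑parameter subgroup of the torus $(\mathbb{C}^\ast)^2$ dual to the gradient of $F$. For distinct points $Q_1,\dots,Q_k$, let $M_D(n_1,\dots,n_k)$ be the matrix with columns indexed by $D$, rows by the pairs $(j,\nu)$ with $|\nu|<n_j$, and $((j,\nu),\alpha)$‑entry the value $\partial^{\nu}(x^{\alpha_1}y^{\alpha_2})(Q_j)$; then $\mathcal{L}_D(n_1,\dots,n_k)=\mathbb{P}(\ker M_D(n_1,\dots,n_k))$, so non‑specialty is equivalent to $M_D(n_1,\dots,n_k)$ having maximal rank $\min(\#D,\sum_j\binom{n_j+1}{2})$, and $\dim\mathcal{L}_D(n_1,\dots,n_k)=\dim_{\mathbb{C}}\ker M_D(n_1,\dots,n_k)-1$. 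Since this dimension is upper semicontinuous in $(Q_1,\dots,Q_k)$ and is always at least the expected value, it suffices to exhibit \emph{one} configuration of $p$ distinct points at which $\dim\mathcal{L}_D(m_1,\dots,m_p)$ equals the expected value. First I would record the numerology: $\dim L_2=-1$ together with $D=D_1\sqcup D_2$ pins down $\#D_2=\binom{m_p+1}{2}$, so $M_{D_2}(m_p)$ is a square matrix, injective -- hence invertible -- at a general point; combined with $\#D_1\ge 1+\sum_{i<p}\binom{m_i+1}{2}$ (from $\dim L_1\ge 0$), this makes the expected dimension of $\mathcal{L}_D(m_1,\dots,m_p)$ equal to $\#D_1-1-\sum_{i<p}\binom{m_i+1}{2}\ge 0$, the target value.

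Next I would set up the degeneration. Fix points $P_1,\dots,P_{p-1}$ general for $L_1$ and a point $Q$ general for $L_2$, set $w(\alpha):=r_1\alpha_1+r_2\alpha_2$, and put $P_p(\lambda):=(\lambda^{r_1}q_1,\lambda^{r_2}q_2)$ for a positive real parameter $\lambda$; note $(r_1,r_2)\ne(0,0)$ since $D_2\ne\emptyset$. The coordinate change $(x,y)\mapsto(\lambda^{r_1}x,\lambda^{r_2}y)$ rescales $M_{D_2}(m_p)$ at $P_p(\lambda)$ into $M_{D_2}(m_p)$ at $Q$ by invertible diagonal matrices, so $M_{D_2}(m_p)$ stays invertible at $P_p(\lambda)$ for all $\lambda$. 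Consequently every $f=f_1+f_2$ supported on $D=D_1\sqcup D_2$ with $\operatorname{mult}_{P_p(\lambda)}f\ge m_p$ has its $D_2$‑part $f_2$ determined linearly by its $D_1$‑part $f_1$, and $f_1\mapsto f_1+f_2(f_1)$ is a linear isomorphism from $\mathbb{C}^{D_1}$ onto $\{f:\operatorname{supp}f\subseteq D,\ \operatorname{mult}_{P_p(\lambda)}f\ge m_p\}$. Composing with the truncated Taylor maps at $P_1,\dots,P_{p-1}$ yields a linear map $E_\lambda\colon\mathbb{C}^{D_1}\to\bigoplus_{i<p}\mathbb{C}^{\binom{m_i+1}{2}}$ with $\dim\mathcal{L}_D(m_1,\dots,m_p)=\#D_1-1-\operatorname{rank}E_\lambda$, so it is enough to make $E_\lambda$ surjective for some $\lambda$.

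The crux is that $E_\lambda\to E_\infty$ as $\lambda\to\infty$, where $E_\infty(f_1)=(\operatorname{Taylor}_{P_i}f_1)_{i<p}$ is exactly the evaluation map attached to $L_1=\mathcal{L}_{D_1}(m_1,\dots,m_{p-1})$, which is surjective because $P_1,\dots,P_{p-1}$ are general and $\dim L_1\ge 0$. Indeed, writing $M_{D_j}(m_p)$ at $P_p(\lambda)$ as a row‑rescaling times $M_{D_j}(m_p)$ at $Q$ times a column‑rescaling shows that each coefficient of $f_2(f_1)$ is a sum of terms $c_{1,\alpha}\,\lambda^{\,w(\alpha)-w(\beta)}$ with $\alpha\in D_1$ and $\beta\in D_2$; since $\alpha\in D_1$ forces $w(\alpha)=F(\alpha)-r_0<-r_0$ while $\beta\in D_2$ forces $w(\beta)>-r_0$, every exponent $w(\alpha)-w(\beta)$ is strictly negative, so $f_2(f_1)\to 0$ coefficientwise for each fixed $f_1$ as $\lambda\to\infty$ -- this is precisely where $D=D_1\sqcup D_2$ is used. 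Hence $E_\lambda\to E_\infty$; surjectivity is an open condition on linear maps, so $E_\lambda$ is surjective for all large $\lambda$. For such $\lambda$ the points $P_1,\dots,P_{p-1},P_p(\lambda)$ are distinct and $\dim\mathcal{L}_D(m_1,\dots,m_p)=\#D_1-1-\sum_{i<p}\binom{m_i+1}{2}\ge 0$; by semicontinuity the same equality holds at a general configuration, which is the assertion.

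I expect the genuinely delicate steps to be the numerology of the first paragraph -- that $L_2$ of dimension $-1$ really makes $M_{D_2}(m_p)$ a square invertible block, i.e.\ the multiplicity condition at $P_p$ is ``absorbed'' exactly by the monomials of $D_2$ -- and the correct choice of degeneration direction, namely along the gradient of $F$ and in the direction that makes the interaction exponents $w(\alpha)-w(\beta)$ with $\alpha\in D_1,\ \beta\in D_2$ decay rather than grow; everything else is a routine limit‑and‑semicontinuity argument. Equivalently, and perhaps more transparently, one can keep the points fixed and degenerate the equations instead, replacing $f(x,y)$ by $f(\lambda^{r_1}x,\lambda^{r_2}y)$ and renormalizing the $D_1$‑ and $D_2$‑blocks of coefficients separately, so that $M_D(m_1,\dots,m_p)$ acquires the block‑triangular limit \[\begin{pmatrix}M_{D_1}(m_1,\dots,m_{p-1})&0\\ \ast&M_{D_2}(m_p)\end{pmatrix},\] whose two diagonal blocks have maximal rank by hypothesis.
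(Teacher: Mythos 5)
The paper offers no internal proof of this theorem; it only cites Dumnicki's Theorem~13. Your one-parameter torus degeneration is sound and, as far as I can tell, is essentially Dumnicki's own mechanism: for $\alpha\in D_1$, $\beta\in D_2$ the weights satisfy $w(\alpha)<-r_0<w(\beta)$, so after eliminating the $D_2$-coefficients the mixed entries decay like $\lambda^{w(\alpha)-w(\beta)}\to 0$, the collision matrix becomes block-triangular in the limit $\lambda\to\infty$, and openness of surjectivity together with upper semicontinuity of the kernel dimension finishes the argument.

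Two remarks on the hypotheses. The statement as transcribed reads $L_1:=\mathcal{L}_D(m_1,\ldots,m_{p-1})$ and $L_2:=\mathcal{L}_D(m_p)$ over the same ambient $D$; this must be a typo for $\mathcal{L}_{D_1}$ and $\mathcal{L}_{D_2}$, as you silently and correctly read it (otherwise the cut by $F$ plays no role in the hypotheses at all). More substantively, your claim that $\dim L_2=-1$ ``pins down $\#D_2=\binom{m_p+1}{2}$'' does not follow: non-specialty of dimension $-1$ gives only $\#D_2\le\binom{m_p+1}{2}$, and without equality the theorem is actually false --- take $D_2=\emptyset$, $m_p=1$, $\#D_1=2$, $m_1=1$: both hypotheses hold, yet $\mathcal{L}_D(1,1)$ is empty. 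The condition $\#D_2=\binom{m_p+1}{2}$, which is exactly what makes your $D_2$-block square and invertible, must therefore be an explicit hypothesis; this is consistent with the cardinality assumption $\#D=\binom{m+1}{2}$ in Dumnicki's non-specialty criterion quoted just above, and it appears to have been dropped in the paper's transcription of Theorem~13. Once restored, your proof goes through.
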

\begin{proof}
See \cite[Thm.13]{D06}. 
\end{proof}

\noindent Dumnicki used this procedure to show the Harbourne-Hirschowitz conjecture up to $m=42$, but the 
power and simplicity of it can best be seen by some easy graphical proofs of non-specialty. For example, 
Dumnicki~\cite[Ex.37]{D06} used a computer to find the following proof for non-specialty of the system
$L=\mathcal{L}_{21}(7^{\times 6}, 6^{\times 4},1)$:

$$

$$

\noindent Since the Nagata conjecture for square-free integers $r > 9$ involves irrational square roots, it
seems appropriate to look for an asymptotic version of Dumnicki's reduction algorithm. To this purpose we 
introduce the following notion:
\begin{Def}    \label{asymp-sys-Def}
Let $m_1, \ldots, m_r \in \mathbb{R}_{>0}$. A subset $P \subset \mathbb{R}^2_{\geq 0}$ contains 
asymptotically $(m_1, \ldots, m_r)$-non-special systems (of dimension $\geq d$) iff for all $\delta > 0$ and all
$k >> 0$ there exist $m_1^{(k)}, \ldots, m_r^{(k)} \in \mathbb{N}$ and a 
$D_k \subset k \cdot P \cap \mathbb{N}^2_{\geq 0}$ such that
\begin{itemize}
\item[(i)] $\mathcal{L}_{D_k}(m_1^{(k)}, \ldots, m_r^{(k)})$ is non-special (of dimension $\geq d$) and
\item[(ii)] $\left| \frac{m_i^{(k)}-km}{km_i} \right| < \delta$, $i=1, \ldots , r$.  
\end{itemize} 
\end{Def}

\noindent With this notion we prove the following method of obtaining bounds on Seshadri constants on
$\mathbb{P}^2$ (see Section~\ref{LowerBound-sec} for the proof):
\begin{prop} \label{nef-crit}
If the set $P := \{ x + y \leq 1 \} \cap \mathbb{R}^2_{\geq 0}$ contains asymptotically $(m^r)$-non-special systems
of dimension $\geq 0$ then
\[ H - m \sum_{j=1}^r E_j \]
is nef on $X$, where $X$ is the blow-up of $\mathbb{P}^2$ in $r$ very general points, $H$ is the pullback of a 
line in $\mathbb{P}^2$ to $X$, and the $E_i$ are the exceptional divisors on $X$.
\end{prop}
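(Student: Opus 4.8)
The plan is to argue by contradiction, reducing nef-ness of $L:=H-m\sum_{j=1}^{r}E_{j}$ to a B\'ezout estimate against the curves produced by Definition~\ref{asymp-sys-Def}. Since the $r$ points are very general, they are general for each of the countably many systems $\mathcal{L}_{D_{k}}(m^{r})$ arising in the hypothesis, so all of these attain their (non-negative) expected dimension on our configuration. Now $L$ is nef iff $L\cdot\Gamma\ge 0$ for every irreducible curve $\Gamma$ on $X$; for $\Gamma=E_{j}$ this reads $m>0$, and otherwise $\Gamma$ is the strict transform of a plane curve $\Gamma_{0}$ of degree $a=\deg\Gamma_{0}\ge 1$ with $b_{j}=\operatorname{mult}_{p_{j}}\Gamma_{0}\ge 0$, so $\Gamma\equiv aH-\sum_{j}b_{j}E_{j}$ and we must prove $a\ge m\sum_{j}b_{j}$. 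The hypothesis forces $m\le 1/\sqrt r$: non-specialty together with $\dim\ge 0$ gives $\#D_{k}\ge 1+\sum_{j}\binom{m_{j}^{(k)}+1}{2}$, while $\#D_{k}\le\#(kP\cap\mathbb{N}^{2})=\binom{k+2}{2}$, and dividing by $k^{2}$ and letting first $k\to\infty$ and then $\delta\to 0$ yields $\tfrac12\ge\tfrac{rm^{2}}{2}$. Hence whenever $\Gamma^{2}=a^{2}-\sum_{j}b_{j}^{2}\ge 0$, Cauchy--Schwarz gives $a^{2}\ge\sum_{j}b_{j}^{2}\ge\tfrac1r(\sum_{j}b_{j})^{2}$, i.e.\ $a/\sum_{j}b_{j}\ge 1/\sqrt r\ge m$. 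So from now on assume $\Gamma^{2}<0$ and, for contradiction, $a<m\sum_{j}b_{j}$.

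For the main estimate, choose a rational $m'$ with $a/\sum_{j}b_{j}<m'<m$ and a $\delta>0$ with $m(1-\delta)>m'$. For $k\gg 0$ take a general member $C_{k}$ of $\mathcal{L}_{D_{k}}(m^{r})$; its strict transform has class $d_{k}H-\sum_{j}\mu_{j}^{(k)}E_{j}$ with $d_{k}\le k$ and $\mu_{j}^{(k)}\ge m_{j}^{(k)}>km'$. If $\Gamma_{0}$ is not a component of $C_{k}$ --- equivalently, $\Gamma_{0}$ is not a fixed component of $\mathcal{L}_{D_{k}}(m^{r})$ --- then B\'ezout in $\mathbb{P}^{2}$ gives
\[
 ak\ \ge\ ad_{k}\ =\ (\deg\Gamma_{0})(\deg C_{k})\ \ge\ \sum_{j}b_{j}\mu_{j}^{(k)}\ >\ km'\sum_{j}b_{j},
\]
hence $a>m'\sum_{j}b_{j}$, contradicting the choice of $m'$. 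This settles every $\Gamma$ that is never a fixed component of the witnessing systems, and this inequality is entirely routine.

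What remains --- and the step I expect to be the hard one --- is the case where $\Gamma_{0}$ is a fixed component of $\mathcal{L}_{D_{k}}(m^{r})$, of some multiplicity $T_{k}\ge 1$. Splitting the strict transform of the system into its mobile part (which has no fixed component, hence is nef on $X$ and meets $\Gamma$ non-negatively) and its fixed part $\widetilde{F_{k}}=T_{k}\Gamma+\widetilde{F_{k}'}$ with $\Gamma$ not a component of $\widetilde{F_{k}'}$ (so $\widetilde{F_{k}'}\cdot\Gamma\ge 0$), one now only obtains $\widetilde{C_{k}}\cdot\Gamma\ge T_{k}\Gamma^{2}$, i.e.\ $\sum_{j}m_{j}^{(k)}b_{j}\le ak+T_{k}\lvert\Gamma^{2}\rvert$, so passing to the limit forces the desired contradiction only if $T_{k}=o(k)$. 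I would try to get this by stripping $\Gamma_{0}$ off: the residual system $\mathcal{L}_{D_{k}'}(m_{1}^{(k)}-T_{k}b_{1},\dots,m_{r}^{(k)}-T_{k}b_{r})$ sits inside $(k-T_{k}a)P$, keeps the dimension of $\mathcal{L}_{D_{k}}(m^{r})$, and is again a system of the asymptotic shape of Definition~\ref{asymp-sys-Def} at level $k-T_{k}a$ provided $k-T_{k}a$ stays of order $k$; comparing the dimension $\#D_{k}-1-\sum_{j}\binom{m_{j}^{(k)}+1}{2}$ with $\binom{k-T_{k}a+2}{2}-1$ (using $m<1/\sqrt r$ strictly, which makes these of order $k^{2}$ in the situation of interest) should pin $T_{k}a$ far enough below $k$ to close the argument via the B\'ezout step on the residual system. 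The real work lies in this bookkeeping --- guaranteeing that stripping fixed components never pushes the system outside the asymptotic window controlled by Definition~\ref{asymp-sys-Def}, and that the fixed multiplicity of a would-be violating curve cannot grow linearly in $k$ --- and it is here that the non-specialty hypothesis, forcing the dimension and hence the available degree to be maximal, is genuinely used.
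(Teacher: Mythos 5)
Your argument diverges from the paper's and, as you yourself flag, has a genuine gap that the paper sidesteps entirely. The paper's proof is a short reduction: from the hypothesis one extracts $d_n \to \infty$ and $m_i^{(n)}$ with $m_i^{(n)}/(d_n m) \to 1$ such that $\mathcal{L}_{D_n}(m_1^{(n)},\ldots,m_r^{(n)})$ is non-special of dimension $\geq 0$; since $D_n \subset d_n P \cap \mathbb{N}^2 = \{i+j \leq d_n\}$, Proposition~\ref{inc-non-spc-prop} together with the observation that lowering multiplicities preserves non-speciality shows that $\mathcal{L}_{d_n}(m_n^r)$, with $m_n := \min_i m_i^{(n)}$, is non-special of dimension $\geq 0$; then Theorem~\ref{Eckl-thm} applied to the sequence $(d_n, m_n - 1)$ gives the nefness, because $d_n/(m_n - 1) \to 1/m$. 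Theorem~\ref{Eckl-thm}, quoted from \cite{Eck05b}, carries the entire weight of the argument, and it is precisely the statement you are trying to reprove from scratch.

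The fixed-component case you single out is indeed where your self-contained route breaks down, and the repair you sketch does not work as written. Removing the fixed factor $\Gamma_0^{T_k}$ from $\mathcal{L}_{D_k}$ does \emph{not} produce a system of the form $\mathcal{L}_{D_k'}$ for any $D_k' \subset \mathbb{N}^2$: monomial support is not preserved under dividing by the equation of $\Gamma_0$, so the residual system does not fit the shape demanded by Definition~\ref{asymp-sys-Def} and cannot be fed back into the asymptotic framework. The bookkeeping also presupposes $T_k a$ bounded well below $k$, which is exactly what has to be established, and it quietly uses $m < 1/\sqrt{r}$ strictly, whereas the hypothesis only forces $m \leq 1/\sqrt{r}$. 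Bounding the fixed-component multiplicity of a negative curve against effective divisors numerically near the boundary of the nef cone is exactly the nontrivial content of Theorem~\ref{Eckl-thm} (and of the related results of Biran, Harbourne, Ro\'e, et al.); \cite{Eck05b} handles it analytically via Lelong numbers, not by B\'ezout bookkeeping of the kind you propose. The clean fix is to cite Theorem~\ref{Eckl-thm}, as the paper does, rather than to reprove it.
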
 

\noindent To show the existence of $(m^r)$-non-special sytems we develop an asymptotic version of Dumnicki's
reduction algorithm (see Thm.~\ref{As-red-alg-Thm}), and
together with a criterion for asymptotic $(m)$-non-specialty (see Thm.~\ref{as-m-non-spec-crit}), we are able to 
give the following bound on the Seshadri constant of $10$ very general points on $\mathbb{P}^2$ (see again 
Section~\ref{LowerBound-sec} for the proof): 
\begin{thm} \label{Seshadri-bd-thm}
Let $X$ be the blow-up of $\mathbb{P}^2$ in 10 very general points, let $E_1, \ldots, E_{10}$ be the exceptional
divisors on $X$, and let $H$ be the pull back of a line in $\mathbb{P}^2$. Then the divisor
\[ H - \frac{4}{13} \sum_{i=1}^{10} E_i \]
is nef on $X$.  
\end{thm}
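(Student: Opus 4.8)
The plan is to deduce the theorem directly from Proposition~\ref{nef-crit}: it suffices to show that the triangle $P = \{x+y\le 1\}\cap\mathbb{R}^2_{\ge 0}$ contains asymptotically $((\tfrac{4}{13})^{10})$-non-special systems of dimension $\ge 0$ in the sense of Definition~\ref{asymp-sys-Def}. A first sanity check makes this plausible: a point of multiplicity $\approx k\cdot\tfrac{4}{13}$ imposes $\approx\tfrac12\bigl(k\cdot\tfrac{4}{13}\bigr)^2 = \tfrac{8}{169}k^2$ conditions, so ten such points impose $\tfrac{80}{169}k^2$ conditions, while $kP$ carries $\#(kP\cap\mathbb{N}^2) = \tfrac12 k^2 + O(k)$ monomials; since $\tfrac{80}{169}<\tfrac12$, the expected dimension stays positive and grows like $\tfrac{9}{338}k^2$, which is exactly the room the asymptotic reduction algorithm needs.

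The core of the argument is to run the asymptotic version of Dumnicki's reduction algorithm (Theorem~\ref{As-red-alg-Thm}) ten times, removing one of the ten points at each step. Concretely, I would exhibit an explicit nested sequence of lattice polygons
\[ P = Q_{10}\supset Q_9\supset\cdots\supset Q_1\supset Q_0, \]
where at the $j$-th step a single affine functional $F_j$ cuts $Q_j$ into $Q_{j-1}$ and a region $R_j$, with $R_j$ carrying the $j$-th multiplicity $\approx\tfrac{4}{13}$ and $Q_{j-1}$ carrying the remaining $j-1$ multiplicities. For each $R_j$ one has to verify that $\mathcal{L}_{R_j}(\tfrac{4}{13})$ is asymptotically $(\tfrac{4}{13})$-non-special of dimension $-1$; this is where Theorem~\ref{as-m-non-spec-crit} enters, and in practice one arranges $R_j$ to be a triangle (or near-triangle) of area $\tfrac{8}{169}$ whose scaled lattice set has $\approx\binom{\tfrac{4k}{13}+1}{2}$ points splitting into $\approx\tfrac{4}{13}k$ parallel lattice lines of lengths $1,2,3,\dots$, so that the B\'ezout argument behind Dumnicki's non-specialty criterion applies. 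After the tenth cut, $Q_0$ carries no multiplicity conditions, so $\mathcal{L}_{Q_0\cap\mathbb{N}^2}()$ is trivially non-special of dimension $\#(Q_0\cap\mathbb{N}^2)-1\ge 0$, which anchors the recursion; reassembling the ten steps yields asymptotic $((\tfrac{4}{13})^{10})$-non-specialty of dimension $\ge 0$. Condition (ii) of Definition~\ref{asymp-sys-Def} is automatic, since each prescribed integer multiplicity is chosen within $O(1)$ of $\tfrac{4}{13}k$ and we let $k\to\infty$.

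The main obstacle — and the real content — is the explicit combinatorial geometry: one must actually produce ten cutting lines $F_1,\dots,F_{10}$ and ten regions $R_1,\dots,R_{10}$ fitting inside $P$ so that \emph{every} intermediate polygon $Q_j$ is a genuine lattice polygon amenable to further cutting and so that \emph{each} peeled region $R_j$ passes the asymptotic $(\tfrac{4}{13})$-non-specialty criterion, with $Q_0$ ending up with the predicted positive area $\tfrac{9}{338}$. This is the asymptotic analogue of Dumnicki's computer search; I would present the ten steps as a sequence of pictures, in the style of the example figures above, and check each one. Particular care is needed at the two points where the fractional nature of $\tfrac{4}{13}$ meets integrality: the rounding of $\tfrac{4}{13}k$ to an integer multiplicity at each point, and the asymptotics of the lattice-point counts in the thin slabs $R_j$ as $k\to\infty$ (where one wants the count to agree with $\tfrac12\cdot\mathrm{area}\cdot k^2$ up to lower order, uniformly). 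Once a valid configuration of the ten triangles is written down, the remaining work is routine area and lattice-point bookkeeping.
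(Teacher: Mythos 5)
Your high-level plan is the same as the paper's: reduce to asymptotic $((\tfrac{4}{13})^{10})$-non-specialty of the simplex via Prop.~\ref{nef-crit}, then peel off one multiplicity at a time with the asymptotic reduction algorithm (Thm.~\ref{As-red-alg-Thm}), certifying each peeled region by the asymptotic $(m)$-non-specialty criterion (Thm.~\ref{as-m-non-spec-crit}). The heuristic area count ($\tfrac{80}{169}<\tfrac12$, surplus $\tfrac{9}{338}k^2$) is a correct sanity check, and your observation that condition (ii) of Def.~\ref{asymp-sys-Def} is satisfied by rounding is fine.

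However, there is a genuine gap, and you name it yourself: ``the main obstacle --- and the real content --- is the explicit combinatorial geometry.'' You never produce the ten cutting lines, the ten regions, or the verification that each region meets the criterion of Thm.~\ref{as-m-non-spec-crit}. That construction \emph{is} the proof; everything else is framing. The paper gives a concrete dissection of the simplex $OAB$ into ten convex polygons $P_1,\ldots,P_{10}$ with nineteen named vertices (e.g.\ $C=(\tfrac{9}{13},0)$, $E=(\tfrac{9}{13},\tfrac{4}{13})$, $K=(\tfrac{7}{13},\tfrac{6}{13})$, $P=(\tfrac{9}{26},\tfrac{9}{26})$, \ldots), arranged so that each $P_i$ is convex, projects to an interval of length at least $\tfrac{4}{13}$, and has a vertical (or horizontal) section of length at least $\tfrac{4}{13}$; together with Prop.~\ref{mon-re-id-prop} and Prop.~\ref{conc-cont-prop} this verifies the hypothesis $f^\#(t)\ge t$ of Thm.~\ref{as-m-non-spec-crit}. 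This ``two-lengths'' reduction is worth noting: it replaces your proposed direct count of lattice points in parallel lines inside a near-triangle (the raw form of Dumnicki's criterion) by a pure convex-geometry check, which is what makes the by-hand verification of all ten regions feasible. Also note that the paper's scheme ends with $P_{10}$ carrying the last multiplicity and being non-special of dimension $\ge 0$ (nine applications of the reduction step), whereas you propose an eleventh region $Q_0$ with no multiplicity; both bookkeepings are admissible, but either way you still have to exhibit the regions. As written, your argument is a plan for a proof, not a proof; to complete it you would need to supply and check an explicit dissection of the type given in the paper.
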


\noindent In recent years many authors tried to give lower bounds for the Seshadri constants of a fixed number of general 
points on algebraic surfaces, and especially on $\mathbb{P}^2$ \cite{Xu95, ST02, Harb03, HR04, HR05}. Some of these 
bounds are even better than 
$\frac{4}{13} \approx 0.307$, which is still  not really close to $\frac{1}{\sqrt{10}} \approx 0.3162$: Tutaj-Gasi\'nska 
\cite{Tut03} achieved $\frac{2}{11}\sqrt{3} \approx 0.314$, Biran \cite{Bir99} $\frac{6}{19} \approx 0.3158$, and 
Harbourne-Ro\'e \cite{HR03} even $\frac{177}{560} \approx 0.31607$. At 
least, our bound is better than what can be achieved by using the non-specialty of all non-empty linear systems
$\mathcal{L}_d(m^{10})$ up to $m \leq 42$, shown by Dumnicki~\cite{D06}: The expected dimension of
$\mathcal{L}_d(41^{10})$ is $\geq 0$ iff $d \geq 132$, hence Thm.~\ref{Eckl-thm} applied to the constant 
sequence $(132,40)$ gives the bound $\frac{40}{132} \approx 0.303$. For all other multiplicities $m \leq 42$ the 
bound gets smaller.

\noindent In any case the true interest in this bound lies in the fact that it was shown with an asymptotic method.

\vspace{0.2cm}

\noindent \textit{Acknowledgement.}
The author would like to thank Felix Sch\"uller who explained Dumnicki's techniques in his diploma thesis \cite{Sch07}.

\section{Monotone Reordering}

\noindent In this section we collect elementary, but useful facts about the \textit{monotone reordering} of 
functions:
\begin{Def}
Let $f: [a,b] \rightarrow \mathbb{R}$ be a measurable function on a closed interval $[a,b] \subset \mathbb{R}$.
Then the monotone reordering $f^\#: (0,b-a] \rightarrow \mathbb{R}$ of $f$ is defined by
\[ t \mapsto \inf \left\{ s: t \leq \mathrm{length\ of\ } \{t^\prime \in [a,b]: f(t^\prime) \leq s\} \right\}. \]
\end{Def}

\noindent This notion will be used to state the criterion of $(m)$-non-specialty (see 
Thm.~\ref{as-m-non-spec-crit}).

\begin{rem}
Let $f: [a,b] \rightarrow \mathbb{R}$ be a step function. Then $f^\#: (0, b-a] \rightarrow \mathbb{R}$ reorders its
steps such that they increase monotonely. Another example is given in the following diagram which shows the monotone 
reordering of a piecewise-linear function:

\begin{center}
\begin{picture}(150,60)(-8,-8)
\put(-5,0){\vector(1,0){55}}
\put(0,-5){\vector(0,1){55}}

\put(0,0){\line(1,2){20}}
\put(20,40){\line(1,-2){20}}

\put(-1,40){\line(1,0){2}}
\put(20,-1){\line(0,1){2}}
\put(-8,37){\small h}
\put(17,-8){\small t}
\put(37,-8){\small 2t}

\put(55,20){\vector(1,0){10}}

\put(75,0){\vector(1,0){55}}
\put(80,-5){\vector(0,1){55}}

\put(80,0){\line(1,1){40}}

\put(79,40){\line(1,0){2}}
\put(120,-1){\line(0,1){2}}
\put(72,37){\small h}
\put(117,-8){\small 2t}
\end{picture}
\end{center}
\end{rem}

\begin{prop}  \label{mon-re-inc-prop}
The monotone reordering $f^\#$ of a function $f: [a,b] \rightarrow \mathbb{R}$ is monotonely increasing and 
lower semi-continuous.
\end{prop}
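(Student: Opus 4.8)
The plan is to recognise $f^\#$ as the generalised (left-continuous) inverse of the distribution function of $f$, and to deduce both assertions from elementary monotonicity considerations rather than from any delicate estimate. First I would introduce, using the measurability of $f$, the auxiliary function
\[ \phi(s) := \left| \{ t' \in [a,b] : f(t') \leq s \} \right|, \qquad s \in \mathbb{R}, \]
where $|\cdot|$ denotes Lebesgue measure (the ``length'' appearing in the definition of $f^\#$), so that by definition $f^\#(t) = \inf\{ s \in \mathbb{R} : \phi(s) \geq t \}$ for $t \in (0, b-a]$. The point to record at the outset is that $\phi$ is non-decreasing in $s$ (a larger $s$ gives a larger sublevel set), and that by continuity of measure from below $\phi(s) \to b-a$ as $s \to +\infty$, since $\bigcup_{s} \{t' : f(t') \leq s\} = [a,b]$ because $f$ is real-valued. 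This guarantees that $\{ s : \phi(s) \geq t \}$ is non-empty, hence $f^\#(t)$ is well defined, at least when $f$ is bounded above; otherwise one simply allows the value $+\infty$.

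For monotonicity the argument is purely set-theoretic: if $0 < t_1 \leq t_2$, then $\{ s : \phi(s) \geq t_2 \} \subseteq \{ s : \phi(s) \geq t_1 \}$, and the infimum of a subset is at least the infimum of the ambient set, whence $f^\#(t_1) \leq f^\#(t_2)$.

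For lower semi-continuity I would use that a non-decreasing function is lower semi-continuous exactly when it is left-continuous, so it suffices to show $\sup_{t < t_0} f^\#(t) = f^\#(t_0)$ for every $t_0 \in (0, b-a]$. Monotonicity already gives ``$\leq$''. For the reverse, suppose $L := \sup_{t < t_0} f^\#(t) < f^\#(t_0)$ and choose $s$ with $L < s < f^\#(t_0)$. For each $t < t_0$ we have $f^\#(t) \leq L < s$, so $s$ is not a lower bound of $\{ s' : \phi(s') \geq t \}$; hence there is $s' < s$ with $\phi(s') \geq t$, and monotonicity of $\phi$ forces $\phi(s) \geq \phi(s') \geq t$. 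Since this holds for all $t < t_0$, we get $\phi(s) \geq t_0$, i.e. $s$ lies in $\{ s' : \phi(s') \geq t_0 \}$, so $f^\#(t_0) \leq s$, contradicting $s < f^\#(t_0)$.

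The main thing to get right is not any single computation but the bookkeeping with the two nested infima — the one defining $f^\#$ and the one hidden in $\phi$ — and the direction of every inequality; there is no real analytic obstacle, everything reducing to monotonicity of $\phi$ together with ``the infimum of a smaller set is larger''. The one genuine subtlety is the case where $f$ is unbounded, in which $f^\#(b-a)$ may equal $+\infty$; this affects neither claim, as monotonicity and lower semi-continuity make sense in $[-\infty,+\infty]$ with the usual conventions, and it does not occur in the intended applications (step functions and piecewise-linear functions).
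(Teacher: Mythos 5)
Your proof is correct, and it takes essentially the same route as the paper: monotonicity of $f^\#$ comes from "a larger $t$ cuts down the set whose infimum you take," and lower semi-continuity comes from a contradiction argument that pushes a hypothetical downward jump of $f^\#$ through the monotonicity of the sublevel-measure function $\phi(s)$ to force $f^\#(t_0)\leq s$. Your packaging (introducing $\phi$ explicitly and reducing lower semi-continuity of a monotone function to left-continuity) is a clean way to organize the same argument, and you are also right that the paper's codomain $\mathbb{R}$ tacitly assumes boundedness, which holds in the intended applications.
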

\begin{proof}
If $t_1 < t_2$ then $t_2 \leq \mathrm{length\ of\ } \{ f(t^\prime) \leq s\}$ implies 
$t_1 \leq \mathrm{length\ of\ } \{ f(t^\prime) \leq s\}$, hence
\[ \begin{array}{rcccl}
f^\#(t_1) & = & \inf \left\{ s: t_1 \leq \mathrm{length\ of\ } \{ f(t^\prime) \leq s\} \right\} & & \\
             & \leq & \inf \left\{ s: t_2 \leq \mathrm{length\ of\ } \{ f(t^\prime) \leq s\} \right\} & = & f^\#(t_2). 
   \end{array} \]
Furthermore, set $s := f^\#(t)$ and assume for given $\epsilon > 0$ that
\[ \widetilde{t} \leq \mathrm{length\ of\ } \{ f(t^\prime) \leq s - \epsilon \} \]
for all $\widetilde{t} < t$. This implies $t \leq \mathrm{length\ of\ } \{ f(t^\prime) \leq s - \epsilon \}$, a 
contradiction to $f^\#(t)=s$. Hence there exists a $\overline{t} < t$ such that
$f^\#((\overline{t}, b-a)) > s - \epsilon$, and $f^\#$ is lower semi-continuous.
\end{proof}

\begin{prop}   \label{monre-ineq-prop}
Let $f_1, f_2: [a,b] \rightarrow \mathbb{R}$ be two measurable functions such that $f_1 \leq f_2$. Then
$f_1^\# \leq f_2^\#$.
\end{prop}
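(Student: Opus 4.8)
The plan is to unwind the definition of the monotone reordering and exploit the fact that the pointwise inequality $f_1 \leq f_2$ reverses when one passes to sublevel sets, and then reverses back when one takes an infimum over those sublevel sets. Concretely, for each $s \in \mathbb{R}$ put $A_i(s) := \{ t' \in [a,b] : f_i(t') \leq s \}$ for $i = 1,2$. Since $f_1 \leq f_2$ on $[a,b]$, any $t'$ with $f_2(t') \leq s$ also satisfies $f_1(t') \leq s$, so $A_2(s) \subseteq A_1(s)$ and hence $\mathrm{length}(A_2(s)) \leq \mathrm{length}(A_1(s))$ for every $s$. (Measurability of $f_1,f_2$ is precisely what makes these sublevel sets measurable, so their lengths are defined.)

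Next I would feed this into the defining infimum. Fix $t \in (0, b-a]$. If $s$ satisfies $t \leq \mathrm{length}(A_2(s))$, then by the previous step $t \leq \mathrm{length}(A_1(s))$ as well, so
\[ \{ s : t \leq \mathrm{length}(A_2(s)) \} \subseteq \{ s : t \leq \mathrm{length}(A_1(s)) \}. \]
Taking infima and using that the infimum over a smaller set is at least the infimum over a larger set yields
\[ f_1^\#(t) = \inf \{ s : t \leq \mathrm{length}(A_1(s)) \} \leq \inf \{ s : t \leq \mathrm{length}(A_2(s)) \} = f_2^\#(t), \]
which is the assertion. This is structurally the same move as in the monotonicity part of Proposition~\ref{mon-re-inc-prop}: there a larger argument $t$ shrinks the admissible set of thresholds $s$; here a larger function does.

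There is essentially no obstacle here. The only point that needs a moment's attention is keeping the direction of the chain of inclusions straight (larger function $\Rightarrow$ smaller sublevel set $\Rightarrow$ smaller admissible set of $s$ $\Rightarrow$ larger infimum). No continuity, finiteness, or step-function hypotheses are required, and the same argument goes through verbatim for extended-real-valued $f_i$.
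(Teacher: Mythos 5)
Your argument is correct and is essentially identical to the paper's proof: the paper also observes that $f_1\leq f_2$ gives $\mathrm{length}\{f_1\leq s\}\geq\mathrm{length}\{f_2\leq s\}$ for each fixed $s$, deduces the inclusion of the threshold sets $\{s: t\leq \mathrm{length}\{f_1\leq s\}\}\supset\{s: t\leq \mathrm{length}\{f_2\leq s\}\}$, and passes to infima. Your added remarks about measurability and the direction of the inclusions are fine and do not change the substance.
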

\begin{proof}
If $f_1 \leq f_2$ then for fixed $s$, 
\[ \mathrm{length\ of\ } \{ f_1 \leq s \} \geq \mathrm{length\ of\ } \{ f_2 \leq s \}. \]
This implies for fixed $t$ that
\[ \left\{ s: t \leq \mathrm{length\ of\ } \{ f_1 \leq s\} \right\} \supset 
   \left\{ s: t \leq \mathrm{length\ of\ } \{ f_2 \leq s\} \right\}, \]
hence $f_1^\#(t) \leq f_2^\#(t)$.
\end{proof}

\begin{prop}
Let $f: [a,b] \rightarrow \mathbb{R}$ be a continuous function. Then the monotone reordering 
$f^\#:(0, b-a] \rightarrow \mathbb{R}$ is also continuous.
\end{prop}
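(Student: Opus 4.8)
The plan is to build on Proposition~\ref{mon-re-inc-prop}, which already gives that $f^\#$ is monotonely increasing and lower semi-continuous. A monotone function on an interval has one-sided limits at every point, and for a monotone \emph{increasing} function lower semi-continuity forces the left-hand limit to equal the value; hence $f^\#$ is continuous as soon as it has no upward jump, i.e. no interior point $t_0$ at which $\lim_{t\to t_0^+} f^\#(t) > f^\#(t_0)$. At the right endpoint $t=b-a$ there is no right-hand side, so continuity there is immediate from monotonicity together with lower semi-continuity. Thus the whole problem reduces to excluding such jumps at interior points.

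To do this I would rewrite $f^\#$ as the generalized inverse of the distribution function
\[ \mu(s) \ :=\ \lambda\bigl(\{\,t'\in[a,b] : f(t')\le s\,\}\bigr), \]
so that $f^\#(t)=\inf\{\,s:\mu(s)\ge t\,\}$, where $\lambda$ is Lebesgue measure; here $\mu$ is non-decreasing and right-continuous (right-continuity because $\{f\le s\}=\bigcap_{\varepsilon>0}\{f\le s+\varepsilon\}$ is a decreasing intersection of sets of finite measure). The key is the dictionary: a jump of $f^\#$ at $t_0$ corresponds to $\mu$ being constant on a nondegenerate interval at level $t_0$. Concretely, assume $f^\#$ jumps at some $t_0\in(0,b-a)$ and set $s_1:=f^\#(t_0)$ and $s_2:=\lim_{t\to t_0^+}f^\#(t)$, so that $s_1<s_2$. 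Unwinding the definition of $f^\#$ as an infimum yields $\mu(s_1)\ge t_0$ and $\mu(s)\le t_0$ for all $s<s_2$; since $\mu$ is non-decreasing these combine to $\mu\equiv t_0$ on $[s_1,s_2)$, that is,
\[ \lambda\bigl(\{\,s_1<f<s_2\,\}\bigr)=0 . \]

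The contradiction then comes from the continuity of $f$. From $\mu(s_1)=t_0>0$ the set $\{f\le s_1\}$ is nonempty, so $f$ attains some value $\le s_1$; from $\mu(s)=t_0<b-a$ for $s<s_2$ the set $\{f>s\}$ is nonempty for all such $s$, whence $\max_{[a,b]}f\ge s_2$ and $f$ attains some value $\ge s_2$. By the intermediate value theorem $f$ then attains some value strictly between $s_1$ and $s_2$, so $f^{-1}\bigl((s_1,s_2)\bigr)$ is a nonempty relatively open subset of $[a,b]$ and hence has strictly positive Lebesgue measure --- contradicting the displayed vanishing. Therefore no interior jump can occur, and $f^\#$ is continuous.

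I expect the only genuinely delicate point to be the dictionary step --- extracting the two inequalities $\mu(s_1)\ge t_0$ and $\mu(s)\le t_0$ for $s<s_2$ carefully from $f^\#(t)=\inf\{s:\mu(s)\ge t\}$, where one must be attentive both to the infimum and to the right-continuity of $\mu$. Everything afterwards (the intermediate value argument and the fact that a nonempty relatively open subset of a nondegenerate interval has positive measure) is routine.
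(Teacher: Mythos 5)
Your proof is correct and rests on the same central observation as the paper's: because $f$ is continuous, the intermediate value theorem makes the level strip $\{\,s_1 < f < s_2\,\}$ a non-empty relatively open subset of $[a,b]$, hence of positive measure, whenever $f$ attains values on both sides, and this forces the distribution function to increase strictly. You package it as a proof by contradiction through the explicit distribution function $\mu$ and the reduction (for an increasing, lower semi-continuous function) of continuity to the absence of upward jumps, whereas the paper argues upper semi-continuity at each point directly from $\mathrm{length}\{f\le s+\epsilon\}\ge t+\delta_\epsilon$; the mathematical content is the same.
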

\begin{proof}
We already know from Prop.~\ref{mon-re-inc-prop} that $f^\#$ is lower semi-continuous. Let $t \in (0, b-a]$ and
set $s := f^\#(t)$. For $t = b-a$ or $s = \max\{ f(t^\prime): t^\prime \in [a,b]\}$ nothing is to prove. Since $f$ is 
continuous the set $\{t^\prime: s < f(t^\prime) < s+\epsilon \}$ is open and non-empty for all $\epsilon > 0$ and has 
consequently a positive length $\delta_\epsilon$. Then 
\[ \mathrm{length\ of\ } \{ f \leq s+\epsilon \} \geq 
   \mathrm{length\ of\ } \{ f \leq s\} + \mathrm{length\ of\ } \{ s < f < s+\epsilon \} \geq t+\delta_\epsilon, \]
hence $f^\#(t^\prime) < s + 2\epsilon$ for all $t^\prime < t+\delta_\epsilon$, and $f^\#$ is upper 
semi-continuous in $t$.
\end{proof}

\begin{thm}  \label{cutoff-thm}
Let $f: [a,b] \rightarrow \mathbb{R}$ be a continuous function on the closed interval $[a,b] \subset \mathbb{R}$.
Then for all $\epsilon > 0$ there exists a $\delta > 0$ such that for all closed intervals 
$[a^\prime, b^\prime] \subset [a,b]$ with $(b-a) - (b^\prime-a^\prime) < \delta$ the monotone reorderings
$f^\#: (0, b-a] \rightarrow \mathbb{R}$ and 
$(f_{|[a^\prime,b^\prime]})^\#: (0, b^\prime-a^\prime] \rightarrow \mathbb{R}$ satisfy
\[ \parallel \!\! (f^\#)_{|(0, b^\prime-a^\prime]} - (f_{|[a^\prime,b^\prime]})^\# \!\! \parallel_{\mathrm{max}} < 
   \epsilon. \]
\end{thm}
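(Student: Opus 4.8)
The plan is to compare the two monotone reorderings by controlling, for each height $s$, the lengths of the sublevel sets $\{f \le s\}$ in $[a,b]$ versus in $[a',b']$, and then translating length-bounds into uniform bounds on the reordered functions via a quantitative version of the argument already used in the proof that $f^\#$ is continuous. First I would fix $\epsilon > 0$ and invoke uniform continuity of $f$ on $[a,b]$ to get a modulus; more importantly, I want a \emph{lower} bound on the "speed" at which $\{f \le s\}$ grows with $s$. The key quantitative input is the following: since $f$ is continuous on the compact interval $[a,b]$, the function $s \mapsto \operatorname{length}\{t' \in [a,b] : f(t') \le s\}$ is continuous (this is essentially the content of the preceding proposition's proof, where the set $\{s < f < s+\epsilon\}$ was shown to have positive length), hence uniformly continuous, so there is an $\eta = \eta(\epsilon) > 0$ with the property that decreasing $s$ by $\epsilon$ decreases this length by at least\ldots\ no, that direction needs care: what I actually need is that the length function cannot be \emph{too flat}, i.e.\ on any interval of heights where $f^\#$ takes values, a change of $2\epsilon$ in height forces a definite change $\delta_\epsilon > 0$ in length. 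This $\delta_\epsilon$ exists uniformly because the relevant sets $\{s < f < s+\epsilon\}$, as $s$ ranges over the compact image $f([a,b])$, have lengths bounded below by a positive constant (a compactness/continuity argument: the infimum of these lengths over $s$ in a closed subinterval is attained and positive).

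Granting such a uniform $\delta_\epsilon$, I would choose $\delta := \delta_\epsilon$ in the statement. Now suppose $[a',b'] \subset [a,b]$ with $(b-a) - (b'-a') < \delta$. For any height $s$ we have trivially
\[
\operatorname{length}\{t' \in [a',b'] : f(t') \le s\} \;\le\; \operatorname{length}\{t' \in [a,b] : f(t') \le s\},
\]
and since $[a,b] \setminus [a',b']$ has measure $< \delta$,
\[
\operatorname{length}\{t' \in [a,b] : f(t') \le s\} \;<\; \operatorname{length}\{t' \in [a',b'] : f(t') \le s\} + \delta.
\]
Fix $t \in (0, b'-a']$ and write $s := (f_{|[a',b']})^\#(t)$. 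The first inequality gives, by the definition of $\#$ (monotonicity in the length constraint, exactly as in Prop.~\ref{monre-ineq-prop}), that $f^\#(t) \ge s$ would require\ldots\ actually the clean way: from $\operatorname{length}\{t'\in[a',b']: f\le s'\} \le \operatorname{length}\{t'\in[a,b]: f\le s'\}$ for all $s'$, Prop.~\ref{monre-ineq-prop}'s argument (applied to $f$ on the two domains, viewing the restriction as having a "smaller" distribution) yields $f^\#(t) \le (f_{|[a',b']})^\#(t) = s$, i.e.\ one inequality is immediate. For the reverse direction I use the second length inequality together with the uniform non-flatness: $\operatorname{length}\{t'\in[a,b]: f \le s\} \ge \operatorname{length}\{t'\in[a',b']: f\le s\} \ge t$ fails to be enough on its own, so instead observe $\operatorname{length}\{t'\in[a',b']: f\le s+2\epsilon\} \ge \operatorname{length}\{t'\in[a',b']: f\le s\} + \delta_\epsilon \ge t + \delta > t$ (using that the interior set $\{s < f < s+2\epsilon\}$ already intersected with $[a',b']$ has length $\ge \delta_\epsilon$ once $b'-a'$ is close to $b-a$, which again needs the non-flatness to be robust under shrinking the domain by $<\delta$), whence $\operatorname{length}\{t'\in[a,b]: f \le s+2\epsilon\} > t$ too, and therefore $f^\#(t) \le s + 2\epsilon$. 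Combining, $s - 2\epsilon \le f^\#(t) \le s$, i.e.\ $|f^\#(t) - (f_{|[a',b']})^\#(t)| \le 2\epsilon$ on $(0, b'-a']$; replacing $\epsilon$ by $\epsilon/2$ at the outset finishes it.

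The main obstacle, and the step I would spend the most care on, is making the non-flatness estimate \emph{uniform and robust}: I need a single $\delta_\epsilon > 0$ such that for \emph{every} relevant height $s$ and \emph{every} subinterval $[a',b']$ with $(b-a)-(b'-a') < \delta_\epsilon$, the set $\{t' \in [a',b'] : s < f(t') < s + 2\epsilon\}$ still has length at least some positive amount comparable to $\delta_\epsilon$. Pure continuity of $f$ gives positivity for a fixed $s$ and fixed domain, but the uniformity over $s$ comes from compactness of $f([a,b])$ and the claim that $s \mapsto \operatorname{length}\{f \le s\}$ is continuous and strictly increasing on the range of $f^\#$; the robustness under shrinking the domain is the delicate point, since in principle all the mass of $\{s < f < s+2\epsilon\}$ could sit in the tiny removed piece. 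This is handled by noting that on the removed piece the length is $< \delta_\epsilon$, so it suffices to take $\delta_\epsilon$ smaller than the uniform lower bound on $\operatorname{length}\{t' \in [a,b] : s < f(t') < s + 2\epsilon\}$; then even after removal a positive length survives. Once this bookkeeping is arranged, the rest is the two-sided comparison above, which is routine given Propositions~\ref{mon-re-inc-prop} and~\ref{monre-ineq-prop}.
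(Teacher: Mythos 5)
Your route is genuinely different from the paper's. The paper extends $f_{|[a',b']}$ to auxiliary functions $\underline{f}, \overline{f}$ on all of $[a,b]$ by assigning them extremal constant values off $[a',b']$, sandwiches $f$ between them, computes $\underline{f}^\#$ and $\overline{f}^\#$ explicitly in terms of $(f_{|[a',b']})^\#$, and bounds their difference via a uniform-continuity estimate for $(f_{|[a',b']})^\#$ (Claim 1 of the paper's proof, with the non-flatness input appearing inside as Claim 1.2). You instead compare the distribution functions $s \mapsto \mathrm{length\ of\ } \{t': f(t') \le s\}$ on the two domains directly, which is a legitimate alternative and avoids the auxiliary sandwich, at the cost of harder bookkeeping. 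One technical caution: the distribution function is right-continuous and monotone on the range of $f$, but need not be continuous (it jumps by $\mathrm{length\ of\ }\{f=s\}$ at plateau levels), so the uniform non-flatness lower bound should be extracted from the modulus of uniform continuity of $f$ itself --- if $f(\overline{t}) = s - \epsilon$ then $f((\overline{t}-\delta,\overline{t}+\delta)) \subset (s-2\epsilon,s)$, giving length $\geq \delta$ --- exactly as the paper does, rather than from any asserted continuity or strict monotonicity of the distribution function.

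There is, however, a genuine gap in the ``reverse direction.'' With $s := (f_{|[a',b']})^\#(t)$, the inclusion of sublevel sets immediately gives $f^\#(t) \le s$, as you note. For the converse you need $s - 2\epsilon \le f^\#(t)$, that is, $(f_{|[a',b']})^\#(t) \le f^\#(t) + 2\epsilon$. But the chain of inequalities you write establishes $f^\#(t) \le s + 2\epsilon$, which is a weakened restatement of the easy direction and not its converse. To repair it, run the analogous computation with the roles swapped: set $s'' := f^\#(t)$, note $\mathrm{length\ of\ }\{t' \in [a,b] : f(t') \le s''\} \ge t$ by right-continuity of the distribution function, hence $\mathrm{length\ of\ }\{t' \in [a',b'] : f(t') \le s''\} \ge t - \delta$; then the non-flatness at level $s''$ (taking $\delta$ at most half the uniform lower bound so that a piece of length $\geq \delta$ survives removal of the complement) gives $\mathrm{length\ of\ }\{t' \in [a',b'] : f(t') \le s''+2\epsilon\} \ge t$, whence $(f_{|[a',b']})^\#(t) \le s'' + 2\epsilon$ as desired. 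You also need the same boundary cases as the paper's Claim 1: when $s''-2\epsilon$ falls below the minimum of $f$ the non-flatness estimate is unavailable, but the required bound is then trivial since $f^\#$ is bounded below by that minimum.
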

\begin{proof}
Since $f$ is continuous on the compact interval $[a,b]$ the function is uniformly continuous on $[a,b]$.
Consequently, for every $\epsilon > 0$ there exists a $\delta > 0$ such that $| t - t^\prime | < \delta$ implies
$| f(t) - f(t^\prime) | < \epsilon$.

\vspace{0.2cm}

\noindent \textit{Claim 1}. $| t - t^\prime | < \delta$ also implies $| f^\#(t) - f^\#(t^\prime) | < 2\epsilon$.
\begin{proof}
Since $f$ is continuous $f$ has a minimum $s_{\min}$ on $[a,b]$ which also must be a lower bound for $f^\#$ on $(0,b-a]$.
By construction,
$\mathrm{length\ of\ } \{ f \leq s_{\min}+\epsilon\} \geq \delta$, hence
\[ | f^\#(t) - f^\#(t^\prime) | \leq \epsilon < 2\epsilon\ 
                                                       \mathrm{for\ all\ } 0 < t \leq t^\prime < \delta. \]

\noindent Now let $t^\prime \geq \delta$. If $f^\#(t^\prime) - 2\epsilon < s_{\min}$, we have 
\[ | f^\#(t^\prime) - f^\#(t) | \leq f^\#(t^\prime) - s_{\min} < 2\epsilon\ \mathrm{for\ } t < t^\prime, \]
since $f^\#$ is monotonely increasing by Prop.~\ref{mon-re-inc-prop}.

\noindent Otherwise $f^\#(t^\prime) - 2\epsilon \geq s_{\min}$, and we show two claims:

\vspace{0.2cm}

\noindent \textit{Claim 1.1}. $f: [a,b] \rightarrow \mathbb{R}$ continuous $\Rightarrow$ 
                                             $\mathrm{length\ of\ } \{ f < f^\#(t^\prime) \} \leq t^\prime$.
\begin{proof}
The characteristic function of the sets $\{ s-\epsilon < f < s \}$ tend pointwise to $0$ for $\epsilon \rightarrow 0$,
and they are dominated by the integrable characteristic function of $[a,b]$. Hence, by Lebesgue's dominated 
convergence,
\[ \mathrm{length\ of\ } \{ s-\epsilon < f \leq s \} \rightarrow 0,\ \mathrm{for\ } \epsilon \rightarrow 0. \]
If $\mathrm{length\ of\ } \{ f < f^\#(t^\prime) \} > t^\prime$, this limit would 
imply the existence of an $\epsilon>0$ such that
\[ \mathrm{length\ of\ } \{ f < f^\#(t^\prime)-\epsilon \} > t^\prime, \]
a contradiction to the definition of $f^\#(t^\prime)$.
\end{proof}

\vspace{0.2cm}

\noindent \textit{Claim 1.2}. $\mathrm{Length\ of\ } \{ f^\#(t^\prime) - 2\epsilon < f < f^\#(t^\prime) \} \geq \delta$.
\begin{proof}
$f^\#(t^\prime)$ is a value of $f$ on $[a,b]$: Otherwise $f^\#(t^\prime)$ would be bigger than the maximum 
$s_{\max}$ of $f$ on $[a,b]$, hence 
\[ \mathrm{length\ of\ } \{ f \leq f^\#(t^\prime) \} = \mathrm{length\ of\ } \{ f \leq s_{\max} \}, \] 
contradicting the definition of $f^\#(t^\prime)$. Furthermore, $f^\#(t^\prime) - 2\epsilon > s_{\min}$, hence by 
continuity, there exists a $\overline{t}$ such that 
$f(\overline{t}) = f^\#(t^\prime) - \epsilon$. But then the construction of $\delta$ shows that 
\[ f((\overline{t}-\delta, \overline{t}+\delta)) \subset (f^\#(t^\prime) - 2\epsilon,  f^\#(t^\prime)). \]
Since w.l.o.g. we can assume that $\delta < b-a$, the interval 
$(\overline{t}-\delta, \overline{t}+\delta) \cap [a,b]$ has length $\geq \delta$, hence the claim.
\end{proof}

\noindent From these two claims we deduce
\begin{eqnarray*}
\lefteqn{\mathrm{length\ of\ } \{ f \leq f^\#(t^\prime) - 2\epsilon \} = } \\
 & = & \mathrm{length\ of\ } \{ f < f^\#(t^\prime) \} - 
           \mathrm{length\ of\ } \{ f^\#(t^\prime) - 2\epsilon < f < f^\#(t^\prime) \} \\
 & \leq & t^\prime - \delta,
\end{eqnarray*}
hence we have $f^\#(t^\prime) \geq f^\#(t) > f^\#(t^\prime) - 2\epsilon$ for all $t^\prime - \delta < t \leq t^\prime$. 
This proves Claim 1.
\end{proof}

\noindent Now choose $a^\prime, b^\prime \in [a,b]$ such that 
$d := (b-a) - (b^\prime - a^\prime) < \delta$. Since $f$ is continuous it has a maximum $M$
and a minimum $m$ on the compact set $[a^\prime, b^\prime]$. Define
\[ \underline{f}(t) := 
   \left\{ \begin{array}{ll}
           m-\epsilon & \mathrm{for\ } t \in [a,a^\prime) \cup (b^\prime,b] \\
           f(t) & \mathrm{else}                    
           \end{array} \right. ,\ \ 
   \overline{f}(t) := 
   \left\{ \begin{array}{ll}
           M+\epsilon & \mathrm{for\ } t \in [a,a^\prime) \cup (b^\prime,b] \\
           f(t) & \mathrm{else.}                    
\end{array} \right.  \]
Then $\underline{f} \leq f \leq \overline{f}$, hence $\underline{f}^\# \leq f^\# \leq \overline{f}^\#$ by 
Prop.~\ref{monre-ineq-prop}, and furthermore
\[ \underline{f}^\#(t) = 
   \left\{ \begin{array}{ll}
           m-\epsilon & \mathrm{for\ } t \leq d \\
           (f_{|[a^\prime,b^\prime]})^\#(t-d) & \mathrm{else}                    
           \end{array} \right. ,\ \ 
   \overline{f}^\#(t) = 
   \left\{ \begin{array}{ll}
           (f_{|[a^\prime,b^\prime]})^\#(t) & \mathrm{for\ } t \leq b^\prime - a^\prime \\
           M+\epsilon & \mathrm{else.}                    
\end{array} \right.  \]
Consequently, for $t \leq b^\prime - a^\prime$,
\begin{eqnarray*}
| (f_{|[a^\prime,b^\prime]})^\#(t) - f^\#(t) | & \leq & | \overline{f}^\#(t) - \underline{f}^\#(t) | \\
    & = & \left\{ \begin{array}{ll}
           (f_{|[a^\prime,b^\prime]})^\#(t) - (m-\epsilon) & \mathrm{for\ } t \leq d \\
           (f_{|[a^\prime,b^\prime]})^\#(t) - (f_{|[a^\prime,b^\prime]})^\#(t-d) & \mathrm{else.}                    
\end{array} \right. \\
    & < & \left\{ \begin{array}{c}
                  3\epsilon\\
                  2\epsilon
                  \end{array} \right.
\end{eqnarray*}
using $d < \delta$, $m \leq (f_{|[a^\prime,b^\prime]})^\#(t)$ for all $t \in (0,b^\prime - a^\prime]$ and Claim~1 applied 
to $f_{|[a^\prime,b^\prime]}$.
\end{proof}

\begin{prop} \label{max-norm-bd-prop} 
Let $f,g: [a,b] \rightarrow \mathbb{R}$ be two continuous function on $[a,b]$. Then for all $\epsilon > 0$,
\[ \parallel\!\! f - g \!\!\parallel_{\max} < \epsilon\ \ \Longrightarrow \ \  
   \parallel\!\! f^\# - g^\# \!\!\parallel_{\max} < 2\epsilon. \]
\end{prop}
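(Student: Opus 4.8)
The plan is to reduce the statement to the monotonicity result Prop.~\ref{monre-ineq-prop}, combined with the elementary observation that adding a constant to a function translates its monotone reordering by the same constant.

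First I would record this shift lemma: for a measurable $h: [a,b] \rightarrow \mathbb{R}$ and $c \in \mathbb{R}$ one has $(h+c)^\# = h^\# + c$ on $(0, b-a]$. This is immediate from the definition, since $\{t^\prime \in [a,b]: h(t^\prime)+c \leq s\} = \{t^\prime: h(t^\prime) \leq s-c\}$ has the same length as $\{h \leq s-c\}$, so substituting $s = s^\prime + c$ in the infimum defining $(h+c)^\#(t)$ yields $c + h^\#(t)$.

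Next, from $\parallel\!\! f-g \!\!\parallel_{\max} < \epsilon$ we get the pointwise bounds $g - \epsilon \leq f \leq g + \epsilon$ on $[a,b]$. Since $f$ and $g$ share the domain $[a,b]$, their reorderings $f^\#$ and $g^\#$ are both defined on $(0,b-a]$, so the comparison makes sense at every point there. Applying Prop.~\ref{monre-ineq-prop} twice gives $(g-\epsilon)^\# \leq f^\# \leq (g+\epsilon)^\#$ on $(0, b-a]$, and by the shift lemma this reads $g^\# - \epsilon \leq f^\# \leq g^\# + \epsilon$. Hence $\parallel\!\! f^\# - g^\# \!\!\parallel_{\max} \leq \epsilon < 2\epsilon$, which is in fact stronger than claimed; the looser constant in the statement is harmless since $\epsilon > 0$.

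There is essentially no obstacle here. The only point requiring (minimal) care is the passage between strict and non-strict inequalities: one goes from the strict bound $|f-g| < \epsilon$ to the non-strict bounds $g \pm \epsilon$ needed to invoke Prop.~\ref{monre-ineq-prop}, and regains a strict conclusion for free because $\epsilon$ is positive.
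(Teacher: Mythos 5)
Your proof is correct and takes essentially the same approach as the paper: both use the shift identity $(h+c)^\# = h^\# + c$ together with Prop.~\ref{monre-ineq-prop} applied to the sandwiching bounds coming from $|f-g|<\epsilon$. Your observation that the argument in fact yields $\parallel\! f^\# - g^\# \!\parallel_{\max} \leq \epsilon$, sharper than the stated $2\epsilon$, is also consistent with the paper's own proof, which reaches the same non-strict bound before loosening to the claim.
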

\begin{proof}
$\parallel\!\! f - g \!\!\parallel_{\max} < \epsilon$ implies $f-\epsilon < g < f+\epsilon$. Since 
$(f \pm \epsilon)^\# = f^\# \pm \epsilon$, we obtain from Prop.~\ref{monre-ineq-prop}
\[ f^\# - \epsilon \leq g^\# \leq f^\# + \epsilon, \]
hence the claim.
\end{proof}

\begin{prop} \label{mon-re-id-prop}
Let $f: [a,b] \rightarrow \mathbb{R}_{\geq 0}$ be a continuous concave function, 
$f^\#: (0,b-a] \rightarrow \mathbb{R}_{\geq 0}$ its monotone reordering and $M := \max \{ f(t): t \in [a,b] \}$.
If $M \geq b-a$ then $f^\#(t) \geq t$ for all $t \in (0, b-a]$.
\end{prop}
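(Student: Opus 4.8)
The plan is to exploit the concavity of $f$ to get a lower bound on the measure of sublevel sets, which is exactly what controls $f^\#$. Fix $t \in (0, b-a]$ and set $s := f^\#(t)$; by definition of the monotone reordering, $s$ is the infimum of those $\sigma$ with $\mathrm{length\ of\ }\{t' \in [a,b] : f(t') \le \sigma\} \ge t$. I would like to show $s \ge t$, equivalently that for every $\sigma < t$ one has $\mathrm{length\ of\ }\{f \le \sigma\} < t$. Let $c \in [a,b]$ be a point where $f$ attains its maximum $M \ge b-a$. The key geometric observation is that, by concavity, on the interval $[a,c]$ the graph of $f$ lies above the chord from $(a,f(a))$ to $(c,M)$, and on $[c,b]$ above the chord from $(c,M)$ to $(b,f(b))$; since $f \ge 0$, in particular $f$ lies above the ``tent'' function $g(t')$ which is $0$ at $t' = a$ and $t' = b$, equals $M$ at $t' = c$, and is linear on each of $[a,c]$ and $[c,b]$.

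The next step is to compute the sublevel sets of this tent function explicitly. For $0 \le \sigma \le M$, the set $\{g \le \sigma\}$ consists of two half-open intervals at the ends of $[a,b]$, of total length $(c-a)\frac{\sigma}{M} + (b-c)\frac{\sigma}{M} = (b-a)\frac{\sigma}{M}$. Since $M \ge b-a$, this is $\le \sigma$. By Proposition~\ref{monre-ineq-prop} applied to $g \le f$ we get $g^\# \le f^\#$, so it suffices to prove $g^\#(t) \ge t$; and from the length computation, $g^\#(t) = \inf\{\sigma : (b-a)\sigma/M \ge t\} = \frac{Mt}{b-a} \ge t$, using $M \ge b-a$ once more. (One should check the boundary case $\sigma > M$ separately, where $\{g \le \sigma\}$ is all of $[a,b]$, but that only matters when $t = b-a$, and there $g^\#(b-a) = M \ge b-a$ directly.)

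The only mild subtlety — and the step I would be most careful about — is the comparison $f \ge g$: it requires knowing that the maximum of a concave function on an interval is attained, and that a concave function lies above its chords, which holds because $f$ is concave and continuous on the compact interval $[a,b]$. If $f(a)$ or $f(b)$ happens to be negative this would be an issue, but the hypothesis $f \ge 0$ rules that out, so the tent $g$ (which vanishes at the endpoints) genuinely satisfies $g \le f$ everywhere. With that in hand the argument is just the two length computations above, chained through Proposition~\ref{monre-ineq-prop}. I would present it as: reduce to the tent function via concavity and nonnegativity; compute $g^\#(t) = Mt/(b-a)$; conclude $f^\#(t) \ge g^\#(t) = Mt/(b-a) \ge t$ since $M \ge b-a$.
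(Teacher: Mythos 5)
Your proof is correct and takes essentially the same route as the paper's: compare $f$ from below to a tent function peaked at the point where $f$ attains its maximum, then apply Proposition~\ref{monre-ineq-prop}. The only cosmetic difference is that you give the tent apex height $M$ and conclude $f^\#(t) \ge g^\#(t) = Mt/(b-a) \ge t$, whereas the paper scales the tent to apex height $b-a$ so that $g^\#(t) = t$ on the nose; the two are equivalent.
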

\begin{proof}
Since $f$ is continuous, it achieves its maximum in some point $c \in [a,b]$. From now on suppose $c \in (a,b)$.
If $c=a$ or $c=b$ the arguments are similar but easier. Set
\[ g: [a,b] \rightarrow \mathbb{R}_{\geq 0}, 
   t \mapsto \left\{ \begin{array}{ll}
                            \frac{t-a}{c-a} \cdot (b-a)\ \mathrm{for\ all\ } a \leq t \leq c \\
                            \frac{b-t}{b-c} \cdot (b-a)\ \mathrm{for\ all\ } c \leq t \leq b. \rule{0cm}{0.4cm}
                            \end{array} \right. \]
Since $\mathrm{length\ of\ } \{ g \leq s \} = s$, we have $g^\#(t) = t$ for all $t \in (0,b-a]$. On the other hand,
$g \leq f$ because $M \geq b-a$ and $f$ is concave. Consequently,
\[ g^\# \leq f^\# \]
by Prop.~\ref{monre-ineq-prop}, which proves the claim.
\end{proof}

\section{The asymptotic version of Dumnicki's algorithm}

\noindent The asymptotic version of Dumnicki's reduction algorithm now reads as follows:
\begin{thm}[Asymptotic version of Dumnicki's reduction algorithm]  \label{As-red-alg-Thm}
Let $m_1, \ldots. m_{p-1}, m_p \in \mathbb{R}_{> 0}$ and $P \subset \mathbb{R}^2_{\geq 0}$. For
\[ F: \mathbb{R}^2_{\geq 0} \ni (\alpha_1, \alpha_2) \mapsto r_0 + r_1 \alpha_1 + r_2 \alpha_2,\ 
   r_0, r_1, r_2 \in \mathbb{R}, \]
an affine function, define
\begin{eqnarray*}
P_1 & := & P \cap \{ (\alpha_1, \alpha_2) : F(\alpha_1, \alpha_2) < 0\} \\
P_2 & := & P \cap \{ (\alpha_1, \alpha_2) : F(\alpha_1, \alpha_2) > 0\}. 
\end{eqnarray*}
If $P_1$ contains asymptotically $(m_p)$-non-special systems of dimension $-1$ and
$P_2$ contains asymptotically $(m_1, \ldots, m_{p-1})$-non-special systems of dimension $\geq 0$, then
$P$ contains asymptotically $(m_1, \ldots , m_p)$-non-special systems of dimension $\geq 0$.
\end{thm}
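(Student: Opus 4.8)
The plan is to reduce the statement, for each fixed scaling factor $k$, directly to the (non-asymptotic) reduction algorithm stated above, by assembling the two pieces of data supplied by the hypotheses on $P_1$ and $P_2$ and gluing them along the line $\{F=0\}$ after rescaling $F$. There is no new geometry here: the content is that Dumnicki's combinatorial splitting result is robust enough to be fed with the approximating data produced asymptotically on the two half-planes.

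First I would fix $\delta>0$; it suffices to treat $\delta<1$, since condition (ii) of Definition~\ref{asymp-sys-Def} only weakens as $\delta$ grows. Applying the hypothesis on $P_2$ with this $\delta$ yields, for all $k\gg 0$, integers $m_1^{(k)},\dots,m_{p-1}^{(k)}$ and a set $D_k^{(2)}\subset k\cdot P_2\cap\mathbb{N}^2_{\geq 0}$ with $\mathcal{L}_{D_k^{(2)}}(m_1^{(k)},\dots,m_{p-1}^{(k)})$ non-special of dimension $\geq 0$ and $|m_i^{(k)}-km_i|/(km_i)<\delta$ for $i<p$; applying the hypothesis on $P_1$ with the same $\delta$ yields, for all $k\gg 0$, an integer $m_p^{(k)}$ and a set $D_k^{(1)}\subset k\cdot P_1\cap\mathbb{N}^2_{\geq 0}$ with $\mathcal{L}_{D_k^{(1)}}(m_p^{(k)})$ non-special of dimension $-1$ and $|m_p^{(k)}-km_p|/(km_p)<\delta$. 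Choosing $k$ above both thresholds, and large enough that each $m_i^{(k)}\geq 1$ (which follows from $\delta<1$ and $m_i>0$), I set $D_k:=D_k^{(1)}\cup D_k^{(2)}$, so that $D_k\subset k\cdot P\cap\mathbb{N}^2_{\geq 0}$ because $P_1,P_2\subset P$.

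Next I would replace $F$ by the rescaled affine function $F_k(\alpha_1,\alpha_2):=kr_0+r_1\alpha_1+r_2\alpha_2=k\cdot F(\alpha_1/k,\alpha_2/k)$. For a lattice point $q$ one has $q\in k\cdot P_1$ iff $q/k\in P$ and $F(q/k)<0$, i.e. iff $q\in k\cdot P$ and $F_k(q)<0$ (using $k>0$), and symmetrically for $P_2$ with the reversed inequality. Hence, writing $(D_k)_1:=\{q\in D_k:F_k(q)<0\}$ and $(D_k)_2:=\{q\in D_k:F_k(q)>0\}$, one gets $(D_k)_1=D_k^{(1)}$ and $(D_k)_2=D_k^{(2)}$, these are disjoint, and $(D_k)_1\cup(D_k)_2=D_k$ (no point of $D_k$ lies on $\{F_k=0\}$). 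Dumnicki's reduction algorithm now applies verbatim to $D_k$ with the affine function $F_k$ and the positive integers $m_1^{(k)},\dots,m_p^{(k)}$: its two hypotheses, that $\mathcal{L}_{(D_k)_1}(m_p^{(k)})$ is non-special of dimension $-1$ and $\mathcal{L}_{(D_k)_2}(m_1^{(k)},\dots,m_{p-1}^{(k)})$ is non-special of dimension $\geq 0$, are exactly what the previous step produced. It follows that $\mathcal{L}_{D_k}(m_1^{(k)},\dots,m_p^{(k)})$ is non-special of dimension $\geq 0$; since also $D_k\subset k\cdot P\cap\mathbb{N}^2_{\geq 0}$ and the estimates (ii) hold for all $i=1,\dots,p$, this shows $P$ contains asymptotically $(m_1,\dots,m_p)$-non-special systems of dimension $\geq 0$.

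The argument is essentially bookkeeping, and what little care is needed sits at the interface between the lattice picture and the scaled continuous one: that passing from $F$ to $F_k$ preserves the sign stratification on lattice points, that the two lattice sets $D_k^{(1)}$ and $D_k^{(2)}$ really lie strictly on opposite sides of $\{F_k=0\}$ so their union is the stratified set Dumnicki's theorem demands, and that the thresholds ``for all $k\gg 0$'' coming from the hypotheses on $P_1$ and $P_2$ (together with the one ensuring $m_i^{(k)}\geq 1$) can all be met simultaneously for a threshold depending on $\delta$ alone. None of this is deep; it is simply where a sign slip or a misordered quantifier would hide.
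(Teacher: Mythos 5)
Your proof is correct and follows essentially the same route as the paper: apply the two hypotheses to produce $D_k^{(1)}\subset k\cdot P_1$ and $D_k^{(2)}\subset k\cdot P_2$ for $k\gg 0$, rescale $F$ to $F_k$, and feed the union $D_k^{(1)}\cup D_k^{(2)}$ into the finite reduction algorithm. The paper compresses this into a few lines; your version merely spells out the bookkeeping (sign preservation under rescaling, disjointness, the $\delta<1$ precaution ensuring the $m_i^{(k)}$ are positive integers) that the paper leaves implicit.
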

\begin{proof}
By assumption, the set $n \cdot P_1 \cap \mathbb{N}^2_{\geq 0}$ contains an $m_p^{(n)}$-non-special system of dimension
$-1$, and $n \cdot P_2 \cap \mathbb{N}^2_{\geq 0}$ contains an $(m_1^{(n)}, \ldots, m_{p-1}^{(n)})$-non-special system of 
dimension $\geq 0$, such that the $m_i^{(n)}$ satisfy the inequalities $(ii)$ of Def.~\ref{asymp-sys-Def} for a given
$\delta$, for all $n \gg 0$.

\noindent Consequently, Dumnicki's reduction algorithm applied to $n \cdot P \cap \mathbb{N}^2_{\geq 0}$ and 
\[ F_n: \mathbb{R}^2_{\geq 0} \ni (\alpha_1, \alpha_2) \mapsto nr_0 + r_1 \alpha_1 + r_2 \alpha_2,\]
shows that $n \cdot P \cap \mathbb{N}^2_{\geq 0}$ contains an $(m_1^{(n)}, \ldots, m_p^{(n)})$-non-special system of 
dimension $\geq 0$. Since the $m_i^{(n)}$ still satisfy the inequalities $(ii)$ of Def.~\ref{asymp-sys-Def}, for given 
$\delta$, the algorithm is justified.
\end{proof}

\noindent The facts shown in the last section can be used to prove a criterion for asymptotic $(m)$-non-specialty:
\begin{thm}[Criterion for asymptotic $(m)$-non-specialty] \label{as-m-non-spec-crit}
Let $P$ be a convex open subset of $\mathbb{R}^2_{\geq 0}$, such that its closure $\overline{P}$ is compact. Set 
$[a,b] := p_x(\overline{P})$ where $p_x: \mathbb{R}^2_{\geq 0} \rightarrow \mathbb{R}_{\geq 0}$ is the 
projection of $\mathbb{R}^2_{\geq 0}$ onto the positive $x$-axis. Define
\[ f: [a,b] \rightarrow \mathbb{R}_{\leq 0}, t \mapsto f(t) := \mathrm{length\ of\ } p_x^{-1}(t) \cap \overline{P}, \]
and let $f^\#: (0, b-a] \rightarrow \mathbb{R}_{\leq 0}$ be the monotone reordering of $f$.

\noindent If $m < b-a$ and $f^\#(t) \geq t$, for all $t \in (0,m]$, then $P$ contains asymptotically $(m)$-non-special
systems of dimension $-1$. 
\end{thm}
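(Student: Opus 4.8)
The plan is, for each large $k$, to realise a set $D_k$ of the very special shape to which Dumnicki's non-specialty criterion applies, built entirely out of vertical lines. Fix $\delta>0$ and $k\gg0$ and put $\gamma:=(b-a)-m>0$. Unwinding Def.~\ref{asymp-sys-Def} with $r=1$, I must produce $m^{(k)}\in\mathbb N$ with $\bigl|\tfrac{m^{(k)}-km}{km}\bigr|<\delta$ and $D_k\subset kP\cap\mathbb N^2_{\ge0}$ with $\mathcal L_{D_k}(m^{(k)})$ non-special of dimension $-1$. I shall take $D_k$ to be a disjoint union of $m^{(k)}$ lattice segments on $m^{(k)}$ distinct vertical lines carrying $1,2,\dots,m^{(k)}$ points respectively; then $\#D_k=\binom{m^{(k)}+1}{2}$, Dumnicki's non-specialty criterion yields non-specialty, and a non-special system with $\binom{m^{(k)}+1}{2}$ monomials subject to the $\binom{m^{(k)}+1}{2}$ conditions of one $m^{(k)}$-fold point has dimension exactly $-1$. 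So the statement reduces to a combinatorial placement problem.

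For an integer $j$ with $j/k\in(a,b)$, the slice $kP\cap(\{j\}\times\mathbb R)$ is an open interval of length $kf(j/k)$, hence contains a \emph{capacity} of $c_j\ge kf(j/k)-1$ lattice points, and there are $\sim k(b-a)>m^{(k)}$ such columns. Placing the segments amounts to an injective assignment of the demands $1,2,\dots,m^{(k)}$ to columns with each demand not exceeding the capacity of its column; a greedy/Hall argument shows this is possible iff, for every $c\in\{1,\dots,m^{(k)}\}$, the number $N(\ge c):=\#\{j\in\mathbb Z:\,a<j/k<b,\ c_j\ge c\}$ of columns of capacity $\ge c$ satisfies $N(\ge c)\ge m^{(k)}+1-c$. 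I then rephrase the hypothesis: by the very definition of the monotone reordering, $f^\#(t)\ge t$ for all $t\in(0,m]$ is equivalent to $\mathrm{length\ of\ }\{f<s\}\le s$ for all $s\in(0,m]$, and hence $\mathrm{length\ of\ }\{f\ge s\}=(b-a)-\mathrm{length\ of\ }\{f<s\}\ge(b-a)-s$ for $0<s\le m$.

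To verify the Hall condition I use that $f$ is concave (because $P$ is convex), so every superlevel set $\{f\ge s\}$ is a subinterval of $[a,b]$; thus $\{j\in\mathbb Z:f(j/k)\ge(c+1)/k\}$ is the set of integers in an interval of length $k\cdot\mathrm{length\ of\ }\{f\ge(c+1)/k\}$, so it has at least $k\cdot\mathrm{length\ of\ }\{f\ge(c+1)/k\}-1$ elements, each of which is a column of capacity $c_j\ge kf(j/k)-1\ge c$. Choosing $m^{(k)}\le\lfloor km\rfloor$, so that $(c+1)/k\le m$ whenever $c\le m^{(k)}-1$, the superlevel bound gives
\[
N(\ge c)\ \ge\ k\cdot\mathrm{length\ of\ }\{f\ge(c+1)/k\}-1\ \ge\ k(b-a)-(c+2)\ =\ \bigl(m^{(k)}+1-c\bigr)+\bigl(k\gamma-3+(km-m^{(k)})\bigr),
\]
and the correction term is positive for all $k\gg0$ exactly because $\gamma>0$; this settles every demand $c\le m^{(k)}-1$. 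The top demand $c=m^{(k)}$ merely needs one column of capacity $\ge m^{(k)}$, which follows from $\max f\ge f^\#(m)\ge m$ (using $f^\#\le\max f$ and the hypothesis) and continuity of $f$, after decreasing $m^{(k)}$ by a further $o(km)$.

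The step I expect to be the main obstacle is the bookkeeping at the long-column end. The slack $\gamma$ comfortably absorbs the $O(1)$ rounding on each individual column, but the bound $f^\#(t)\ge t-\gamma$ becomes tight as $t\to b-a$, and a concave $f$ need not be Lipschitz at the endpoints of $[a,b]$, so the rescaled column-height step functions $u\mapsto c_{\lfloor ku\rfloor}/k$ converge to $f$ in the maximum norm only at the rate of the modulus of continuity of $f$, which may beat $1/k$ only marginally; here Prop.~\ref{max-norm-bd-prop} (stability of the reordering in maximum norm) and Prop.~\ref{mon-re-inc-prop} (its monotonicity) are what let one read off the sorted capacities. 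The remedy is to take $m^{(k)}=\lfloor km\rfloor$ decreased by a correction that vanishes relative to $km$ yet dominates all these errors: condition (ii) of Def.~\ref{asymp-sys-Def} still holds for $k\gg0$, the placement problem becomes feasible, and realising the segments inside the \emph{open} columns of $kP$ (an open interval of length $L$ still contains at least $L-1$ integers) yields $D_k\subset kP\cap\mathbb N^2_{\ge0}$ with exactly $\binom{m^{(k)}+1}{2}$ points.
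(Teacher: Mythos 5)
Your proof is correct, and it takes a genuinely different route from the paper's. The paper approximates $\overline P$ from inside by a union $P_n$ of $\frac1n$-squares, defines the corresponding step function $f_n$, proves uniform convergence $f_n\to f$ on compact subintervals, pushes this through to convergence of the monotone reorderings via the cutoff theorem (Thm.~\ref{cutoff-thm}) and the max-norm stability (Prop.~\ref{max-norm-bd-prop}), and finally reads off the sorted column heights of $P_n$ from $f_n^\#(t)>t-\epsilon$. You instead reformulate the hypothesis $f^\#(t)\ge t$ directly as a measure bound on superlevel sets, use concavity of $f$ (Prop.~\ref{conc-cont-prop}) to make superlevel sets intervals, count lattice points in the columns of $kP$ directly, and verify the Hall/greedy condition for assigning demands $1,\dots,m^{(k)}$ to columns. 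This bypasses the $P_n$ construction, Thm.~\ref{cutoff-thm} and Prop.~\ref{max-norm-bd-prop} entirely and exposes the underlying combinatorics: both arguments are, at bottom, the same Hall condition on sorted column capacities, but yours reaches it with less machinery. One expository quibble: your penultimate paragraph worries about modulus-of-continuity rates and invokes Prop.~\ref{max-norm-bd-prop} and Prop.~\ref{mon-re-inc-prop}, but your actual argument uses neither — the bound $N(\ge c)\ge k(b-a)-c-2$ is obtained without any $\sup$-norm convergence, and the slack $k\gamma$ absorbs all $O(1)$ rounding losses uniformly in $c\le m^{(k)}-1$. The one place your writeup is genuinely schematic is the top demand $c=m^{(k)}$: the assertion ``$\max f\ge m$, so by continuity some column has capacity $\ge m^{(k)}$ after decreasing $m^{(k)}$ by $o(km)$'' needs to be spelled out (fix $\epsilon>0$, find an interval of length $\delta>0$ on which $f\ge m-\epsilon$, take $k>1/\delta$, and choose $m^{(k)}\le k(m-\epsilon)-1$ with $\epsilon\to0$ slowly), but this is routine. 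Also be slightly careful that the endpoint slices $p_x^{-1}(a)\cap\overline P$, $p_x^{-1}(b)\cap\overline P$ may have positive length even though $p_x^{-1}(a)\cap P=\emptyset$; your estimate survives because you only count columns with $a<j/k<b$, and intersecting the superlevel interval with the open interval $(a,b)$ costs at most one more integer, still absorbed by $k\gamma$.
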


\noindent The following diagram illustrates how we obtain the height function$f$ from a convex polygon 
$P \subset \mathbb{R}^2_{\geq 0}$:

\begin{center}
\begin{picture}(180,80)(-5,-5)
\put(-5,0){\vector(1,0){85}}
\put(0,-5){\vector(0,1){65}}

\put(0,30){\line(2,1){40}}
\put(40,50){\line(1,-1){20}}
\put(60,30){\line(-1,-2){10}}
\put(50,10){\line(-1,0){30}}
\put(20,10){\line(-1,1){20}}

\put(30,25){\small P}

\put(20,-1){\line(0,1){2}}
\put(40,-1){\line(0,1){2}}
\put(50,-1){\line(0,1){2}}
\put(60,-1){\line(0,1){2}}

\put(8,-8){\small $t_1$}
\put(28,-8){\small $t_2$}
\put(43,-8){\small $t_3$}
\put(53,-8){\small $t_4$}

\put(90,30){\vector(1,0){10}}

\put(110,0){\vector(1,0){85}}
\put(115,-5){\vector(0,1){65}}

\put(115,0){\line(2,3){20}}
\put(135,30){\line(2,1){20}}
\put(155,40){\line(1,-1){10}}
\put(165,30){\line(1,-3){10}}

\put(135,40){\small f}

\put(135,-1){\line(0,1){2}}
\put(155,-1){\line(0,1){2}}
\put(165,-1){\line(0,1){2}}
\put(175,-1){\line(0,1){2}}

\put(133,-8){\small $t_1$}
\put(153,-8){\small $t_2$}
\put(163,-8){\small $t_3$}
\put(173,-8){\small $t_4$}
\end{picture}
\end{center}

\vspace{0.2cm}

\noindent For the proof of the theorem we need some further properties of the function $f$:
\begin{prop}  \label{conc-cont-prop}
Let $P$ be a convex open subset of $\mathbb{R}^2_{\geq 0}$, $\overline{P}$ compact. Define 
$f: [a,b] \rightarrow \mathbb{R}_{\leq 0}$ as in Thm.~\ref{as-m-non-spec-crit}. Then $f$ is concave and continuous on
$[a,b]$.
\end{prop}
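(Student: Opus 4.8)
The plan is to express $f$ through the upper and lower boundaries of the convex body $\overline{P}$. For each $t\in[a,b]$ the fibre $p_x^{-1}(t)\cap\overline{P}$ is the intersection of the compact convex set $\overline{P}$ with a vertical line, hence a compact convex subset of that line; it is non-empty because $p_x(\overline{P})=[a,b]$ by the definition of $[a,b]$ (the continuous image of the compact connected set $\overline{P}$). Thus the fibre is a closed bounded interval $[g_-(t),g_+(t)]$ with $0\le g_-(t)\le g_+(t)$, and $f(t)=g_+(t)-g_-(t)$ is its length (in particular $f\ge 0$).

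First I would verify that $g_+$ is concave and $g_-$ is convex on $[a,b]$. Given $t_1,t_2\in[a,b]$ and $\lambda\in[0,1]$, the points $(t_i,g_+(t_i))$ lie in $\overline{P}$, so by convexity the point $\lambda(t_1,g_+(t_1))+(1-\lambda)(t_2,g_+(t_2))$ lies in $\overline{P}$; its $x$-coordinate is $\lambda t_1+(1-\lambda)t_2$, so its $y$-coordinate $\lambda g_+(t_1)+(1-\lambda)g_+(t_2)$ is at most $g_+(\lambda t_1+(1-\lambda)t_2)$, which is concavity of $g_+$. The same argument applied to the points $(t_i,g_-(t_i))$, with the reversed inequality, gives convexity of $g_-$. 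Consequently $f=g_++(-g_-)$ is a sum of two concave functions, hence concave on $[a,b]$.

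It remains to prove continuity. Since a concave function on an interval is automatically continuous at every interior point, $f$ is continuous on $(a,b)$, and only the two endpoints need attention. At $a$, concavity of $g_+$ gives $g_+(t)\ge\frac{b-t}{b-a}g_+(a)+\frac{t-a}{b-a}g_+(b)$ for $t\in(a,b)$, so $\liminf_{t\to a^+}g_+(t)\ge g_+(a)$; conversely, if $t_n\to a^+$ with $g_+(t_n)\to L$, then by compactness a subsequence of the points $(t_n,g_+(t_n))\in\overline{P}$ converges to $(a,L)$, which lies in $\overline{P}$ since $\overline{P}$ is closed, whence $L\le g_+(a)$. Therefore $\lim_{t\to a^+}g_+(t)=g_+(a)$. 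The identical reasoning handles the endpoint $b$ and, with the obvious sign changes, the function $g_-$. Hence $g_+$ and $g_-$, and so $f=g_+-g_-$, are continuous on all of $[a,b]$.

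The convexity arguments are routine; the one step I expect to require genuine care is continuity at the endpoints $a$ and $b$, where concavity alone is not enough — a concave function can jump down at an endpoint — and one really needs $\overline{P}$ to be closed and bounded to rule this out. Modulo that observation, the proposition is a direct consequence of the convexity of $\overline{P}$.
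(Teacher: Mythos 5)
Your proof is correct and follows essentially the same route as the paper's: decompose $f$ as the difference of the upper (concave) and lower (convex) boundary functions of $\overline{P}$, deduce concavity of $f$, obtain interior continuity from concavity, and use closedness of $\overline{P}$ together with the chord inequality to settle the endpoints. The only cosmetic difference is that you establish endpoint continuity directly via matching $\liminf$/$\limsup$ bounds, whereas the paper first proves a lemma that one-sided limits of a concave function exist and then excludes a jump by contradiction.
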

\begin{proof}
Let $f^+$ resp. $f^-$ be the functions assigning to each $t \in [a,b]$ the upper resp. lower bound of the interval
$p_x^{-1}(t) \cap P$. Then $f(t) = f^+(t) - f^-(t)$.

\noindent $f^+$ is concave: Choose $t_1, t_2 \in [a,b]$. Then the convexity of $\overline{P}$ implies
\[ \left( \lambda t_1 + (1-\lambda) t_2, \lambda f^+(t_1) + (1-\lambda)f^+(t_2) \right) \in \overline{P}, \]
hence
\[ \lambda f^+(t_1) + (1-\lambda)f^+(t_2) \leq f^+(\lambda t_1 + (1-\lambda) t_2). \]
Similarly $f^-$ is convex, hence $f$ as the difference of a concave and a convex function is concave.

\noindent The following lemma shows that $f$ is also continuous on $(a,b)$ and that 
$\lim_{t \rightarrow a} f^\pm(t)$ and $\lim_{t \rightarrow b} f^\pm(t)$ exist.

\noindent We still have to prove that $\lim_{t \rightarrow a} f^\pm(t) = f^\pm(a)$ resp.  
$\lim_{t \rightarrow b} f^\pm(t) = f^\pm(b)$: Closedness implies $(a, \lim_{t \rightarrow a} f^+(t)) \in \overline{P}$,
hence $\lim_{t \rightarrow a} f^+(t) \leq f^+(a)$. If $\lim_{t \rightarrow a} f^+(t) < f^+(a)$ then
\[ \lambda f^+(a) + (1-\lambda) f^+(t) > (1-\lambda) f^+(t) \]
for $\lambda$ sufficiently close to $1$, contradicting the concavity of $f^+$. The same types of arguments hold for 
the other limits. 
\end{proof}

\begin{lem}
Let $f: [a,b] \rightarrow \mathbb{R}$ be a concave function. Then $f$ is continuous on $(a,b)$, and the limits
$\lim_{t \rightarrow a} f(t)$ and $\lim_{t \rightarrow b} f(t)$ exist.
\end{lem}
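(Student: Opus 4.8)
The plan is to deduce both assertions from the \emph{three-slope inequality} for a concave function: for all $a \leq x < y < z \leq b$,
\[ \frac{f(y)-f(x)}{y-x} \;\geq\; \frac{f(z)-f(x)}{z-x} \;\geq\; \frac{f(z)-f(y)}{z-y}. \]
First I would establish this inequality: writing $y = \lambda x + (1-\lambda)z$ with $\lambda = \frac{z-y}{z-x} \in (0,1)$, concavity gives $f(y) \geq \frac{z-y}{z-x}f(x) + \frac{y-x}{z-x}f(z)$, and then subtracting $f(x)$ and dividing by $y-x$ (resp. subtracting the bound from $f(z)$ and dividing by $z-y$) yields the two inequalities. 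This is a one-line computation.

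For continuity on $(a,b)$ I would fix $t_0 \in (a,b)$, choose $c,d$ with $a < c < t_0 < d < b$, and apply the three-slope inequality to the triples $(c,t_0,t)$ and $(t_0,t,d)$. For $t \in (t_0,d)$ this gives
\[ \frac{f(d)-f(t_0)}{d-t_0} \;\leq\; \frac{f(t)-f(t_0)}{t-t_0} \;\leq\; \frac{f(t_0)-f(c)}{t_0-c}, \]
and the analogous bounds (now applying it to $(c,t,t_0)$ and $(t,t_0,d)$) hold for $t \in (c,t_0)$. Hence $|f(t)-f(t_0)| \leq L\,|t-t_0|$ for all $t \in (c,d)$, where $L$ is the maximum of the absolute values of the slopes of the two chords of $f$ over $[c,t_0]$ and $[t_0,d]$. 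So $f$ is locally Lipschitz, in particular continuous, on $(a,b)$.

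For the endpoint limits I would treat $t \to a^+$ (the case $t \to b^-$ being symmetric). Fix $c \in (a,b)$ and put $s(t) := \frac{f(c)-f(t)}{c-t}$ for $t \in (a,c)$. Applying the three-slope inequality to $(t_1,t_2,c)$ for $a < t_1 < t_2 < c$ shows $s(t_1) \geq s(t_2)$, so $s$ is non-increasing on $(a,c)$; therefore $\lim_{t\to a^+} s(t) = \sup_{t\in(a,c)} s(t) =: S$ exists in $(-\infty,+\infty]$. Since $f(t) = f(c) - (c-t)\,s(t)$ and $c-t \to c-a > 0$, the limit $\lim_{t\to a^+} f(t)$ exists in $[-\infty,+\infty)$. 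To see it is in fact finite, note that concavity applied to $t = \frac{c-t}{c-a}\,a + \frac{t-a}{c-a}\,c$ gives $f(t) \geq \frac{c-t}{c-a}f(a) + \frac{t-a}{c-a}f(c)$, whose right-hand side tends to $f(a)$; thus $\liminf_{t\to a^+} f(t) \geq f(a) > -\infty$, which forces $S < +\infty$ and the limit $f(c)-(c-a)S$ to be a real number.

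The whole argument is elementary, and I do not expect any real obstacle. The one place that needs a little care is the endpoint step: one must check that the monotone difference-quotient function $s$ cannot tend to $+\infty$, and the concavity chord bound of the last sentence is exactly what rules this out — so I would make sure to record that comparison explicitly rather than leave it implicit.
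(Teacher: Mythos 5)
Your argument is correct, and for continuity on the interior it is essentially the same as the paper's: both rest on the three-slope inequality
\[ \frac{f(t_0)-f(t^\prime)}{t_0-t^\prime} \;\geq\; \frac{f(t^{\prime\prime})-f(t^\prime)}{t^{\prime\prime}-t^\prime} \;\geq\; \frac{f(t^{\prime\prime})-f(t_0)}{t^{\prime\prime}-t_0}, \]
which bounds difference quotients near any interior point and gives local Lipschitz continuity. Where you diverge is in the endpoint limits. The paper argues by contradiction, enumerating three ways the one-sided limit at $b$ could fail to exist (two accumulation points, limit $-\infty$, limit $+\infty$) and ruling each out by a separate geometric contradiction with concavity, the last with the aid of a picture. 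You instead observe directly that the chord slope $s(t) = \frac{f(c)-f(t)}{c-t}$ is monotone in $t$ by the three-slope inequality, so it has a (possibly infinite) limit, hence $f(t) = f(c) - (c-t)s(t)$ has a limit in $[-\infty,+\infty)$; and the chord bound $f(t) \geq \frac{c-t}{c-a}f(a) + \frac{t-a}{c-a}f(c)$ then forces this limit to be finite. Your route is cleaner: it is entirely constructive, avoids the three-case analysis, and — importantly — makes explicit the one delicate point (ruling out $s(t) \to +\infty$) that the paper's picture-based case (3) handles more informally. Both are valid; yours is the tidier write-up.
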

\begin{proof}
For any triple $a \leq t^\prime < t_0 < t^{\prime\prime} \leq b$, we have 
$t_0 = \frac{t^{\prime\prime} - t_0}{t^{\prime\prime} - t^\prime} t^\prime +
           \frac{t_0 - t^\prime}{t^{\prime\prime} - t^\prime} t^{\prime\prime}$. The concavity of $f$ implies 
\[ f(t_0) \geq \frac{t^{\prime\prime} - t_0}{t^{\prime\prime} - t^\prime} f(t^\prime) +
           \frac{t_0 - t^\prime}{t^{\prime\prime} - t^\prime} f(t^{\prime\prime}). \]
Subtracting $f(t^\prime)$ from both sides leads to the left hand inequality of
\[ \frac{f(t_0)-f(t^\prime)}{t_0-t^\prime} \geq \frac{f(t^{\prime\prime})-f(t^\prime)}{t^{\prime\prime}-t^\prime} \geq 
   \frac{f(t^{\prime\prime})-f(t_0)}{t^{\prime\prime}-t_0}, \]
subtracting $f(t^{\prime\prime})$ and multiplying with $-1$ to the right hand inequality. 
Renaming $t_0,t^{\prime}, t^{\prime\prime}$ the left hand inequality implies that  the difference
quotients $\frac{f(t^{\prime\prime})-f(t_0)}{t^{\prime\prime}-t_0}$ are increasing for 
$t^{\prime\prime} \stackrel{>}{\rightarrow} t_0$, so they are bounded from below for $t^{\prime\prime}$ close to 
$t_0$. Since $t_0 \in (a,b)$ there is always a $t^\prime < t_0$ in $(a,b)$, hence the inequality chain shows that the 
difference quotients are also bounded from above. Consequently,
\[ | f(t^{\prime\prime})-f(t_0) | < M \cdot | t^{\prime\prime}-t_0 | \]
for appropriate $M > 0$ and $t^{\prime\prime} > t_0$ close to $t_0$. The same can be shown for $t^\prime < t_0$
close to $t_0$, hence $f$ is continuous in $t_0$.

\noindent The situation is more complicated for the boundary points $a$ and $b$. If 
$\lim_{t \stackrel{<}{\rightarrow} b} f(t)$ does not exist there are $3$ possibilities:
\begin{itemize}
\item[(1)] There are at least $2$ accumulation points, for different sequences $(t_n^\pm) \stackrel{<}{\rightarrow} b$,
say $-\infty \leq y^- < y^+ \leq +\infty$. Then it is possible to choose a triple $t_0 < t_n^- < t_m^+$, $n,m \gg 0$, which
contradicts the concavity of $f$ (see the end of the proof in the proposition before).
\item[(2)] $\lim_{t \stackrel{<}{\rightarrow} b} f(t) = -\infty$: Then it is possible to choose a triple $t_0 < t < b$ 
contradicting the concavity of $f$ as before.
\item[(3)] $\lim_{t \stackrel{<}{\rightarrow} b} f(t) = +\infty$: Then it is possible to choose 
$t_0 < t^\prime < t^{\prime\prime} < b$, such that the point $(t^{\prime\prime}, f(t^{\prime\prime}))$ lies over
the line connecting $(t_0, f(t_0))$ and $(t^\prime, f(t^\prime))$. But then the point $(t^\prime, f(t^\prime))$ lies under the
line segment connecting $(t_0, f(t_0))$ and $(t^{\prime\prime}, f(t^{\prime\prime}))$. This contradicts the concavity of 
$f$.

\begin{center}
\begin{picture}(200,180)(-20,-20)
\qbezier(0,20)(140,20)(140,160)
\put(0,12){\line(2,1){140}}
\put(0,17){\line(4,1){140}}

\put(-10,0){\line(1,0){160}}

\put(15,-2){\line(0,1){4}}
\put(80,-2){\line(0,1){4}}
\put(118,-2){\line(0,1){4}}
\put(142,-2){\line(0,1){4}}

\put(12,-10){\small $t_0$}
\put(77,-10){\small $t^\prime$}
\put(115,-10){\small $t^{\prime\prime}$}
\put(139,-10){\small $b$}
\end{picture}
\end{center}
\end{itemize}
The same type of arguments hold for $\lim_{t \stackrel{>}{\rightarrow} a} f(t)$.
\end{proof}

\noindent \textit{Proof of Thm.~\ref{as-m-non-spec-crit}.} We start with a construction: Take any number 
$n \in \mathbb{N}$. Set 
\[ S_{(m_x,m_y)} := \left[ \frac{m_x}{n} - \frac{1}{2n}, \frac{m_x}{n} + \frac{1}{2n} \right] \times
                    \left[ \frac{m_y}{n} - \frac{1}{2n}, \frac{m_x}{n} + \frac{1}{2n} \right], 
   (m_x, m_y) \in \mathbb{N}^2. \]
The $S_{(m_x,m_y)}$ are closed squares of sidelength $\frac{1}{n}$ such that the coordinates of their centers are 
multiples of $\frac{1}{n}$. Then set $P_n := \bigcup_{S_{(m_x,m_y)} \subset \overline{P}} S_{(m_x,m_y)}$, the union of 
all such squares in $\overline{P}$.

\noindent Since $P_n \subset \overline{P}$ we have $p_x(P_n) \subset [a,b]$. Hence we can define the analog of $f$ for
$P_n$:
\[ f_n: [a,b] \rightarrow \mathbb{R}_{\geq 0},\ t \mapsto \mathrm{length\ of\ } p_x^{-1}(t) \cap P_n. \]
By construction, $f_n \leq f$. Furthermore the following holds: 

\vspace{0.2cm}

\noindent \textit{Claim 1.} For all $a < a_0 < b_0 < b$, the functions $f_n$ converge uniformly against $f$ on 
$[a_0,b_0]$, for $n \rightarrow \infty$. 
\begin{proof}
Let $f^+$ and $f^-$ be defined as in the proof of Prop.~\ref{conc-cont-prop}. This proof shows that $f^+$ and $f^-$ are
continuous functions, hence they are uniformly continuous on the compact interval $[a,b]$. Consequently, for every 
$\epsilon > 0$ there exists a $\delta > 0$ such that 
\[ |x-y| < \delta \Rightarrow |f^\pm(x) - f^\pm(y)| < \epsilon\ \mathrm{for\ all\ } x,y \in [a,b].\]
Next, choose $a < a^\prime < a_0 < b_0 < b^\prime < b$. Since $P$ is a convex set and the projection $p_x$ is an open and
continuous map, 
$p_x(P) = (a,b)$, and $p_x^{-1}(t) \cap P$ is a non-empty open interval, for all $t \in (a,b)$. Consequently, $f > 0$ on
$(a,b)$, and $f$ achieves a strictly positive minimum on $[a^\prime, b^\prime]$.

\noindent Let $\epsilon > 0$ be any real number such that $4\epsilon$ is smaller than this minimum. Let 
$n \in \mathbb{N}$ be an integer such that $\frac{1}{n} < \epsilon$, $\frac{1}{n} < \delta$, 
$\frac{1}{n} < \min \{a_0-a^\prime, b^\prime-b_0, b_0-a_0\}$. Let $\frac{k}{n} \in [a_0,b_0]$. By assumption,
\[ f^+(k/n) - f^-(k/n) > 4\epsilon > \epsilon + \frac{1}{2n} + \frac{1}{n} + \frac{1}{2n} + \epsilon. \]
Hence the interval $[f^-(k/n) + \epsilon + \frac{1}{2n}, f^+(k/n) - \epsilon - \frac{1}{2n}]$ has length $> \frac{1}{n}$,
consequently it contains at least one number of the form $\frac{m}{n}$.

\vspace{0.2cm}

\noindent \textit{Claim 1.1.} 
$[\frac{k}{n}-\frac{1}{2n}, \frac{k}{n}+\frac{1}{2n}] \times [\frac{m}{n}-\frac{1}{2n}, \frac{m}{n}+\frac{1}{2n}]
  \subset \overline{P}$.
\begin{proof}
Let $t \in [\frac{k}{n}-\frac{1}{2n}, \frac{k}{n}+\frac{1}{2n}] \subset [a^\prime, b^\prime]$. Then 
$|t-\frac{k}{n}|<\delta$, hence $|f^\pm(t)-f^\pm(k/n)|<\epsilon$. This implies 
$f^+(t) > f^+(k/n)-\epsilon \geq \frac{m}{n}+\frac{1}{2n}$ and 
$\frac{m}{n}-\frac{1}{2n} \geq f^-(k/n)+\epsilon > f^-(t)$.
\end{proof}

\noindent If $\frac{L}{n}$ is maximal among all 
$\frac{m}{n} \in [f^-(k/n) + \epsilon + \frac{1}{2n}, f^+(k/n) - \epsilon - \frac{1}{2n}]$, then 
$\frac{L}{n} \geq f^+(k/n) - \epsilon - \frac{1}{2n} - \frac{1}{n}$, and if $\frac{l}{n}$ is minimal, then
$\frac{l}{n} \leq f^-(k/n) + \epsilon + \frac{1}{2n} + \frac{1}{n}$. Consequently
\[ f^+(t) - (\frac{L}{n} + \frac{1}{2n}) < f^+(t) - (f^+(k/n) - \epsilon - \frac{1}{n}) \leq 
   |f^+(t) - f^+(k/n)| + \epsilon + \frac{1}{n} \leq 2\epsilon + \frac{1}{n}. \]
Similarly we deduce $\frac{l}{n} - \frac{1}{2n} - f^-(t) < 2\epsilon + \frac{1}{n}$.

\noindent Now, $f(t) = f^+(t) - f^-(t) \geq f_n(t) > \frac{L}{n} - \frac{l}{n}$ for 
$t \in [\frac{k}{n}-\frac{1}{2n}, \frac{k}{n}+\frac{1}{2n}]$, hence 
\[ f(t) - f_n(t) \leq f^+(t) - \frac{L}{n} + \frac{l}{n} - f^-(t) < 4\epsilon + \frac{3}{n} < 7\epsilon. \]
Claim 1 is proven.
\end{proof}

\vspace{0.2cm}

\noindent \textit{Claim 2.} For every $\epsilon > 0$ and $n \gg 0$,
\[ \parallel\!\! (f^\#)_{|(0,m]} - (f_n^\#)_{|(0,m]} \!\!\parallel_{\max} < \epsilon. \] 
\begin{proof}
Given $\epsilon > 0$, there is a $\delta > 0$ such that
\[ \parallel\!\! (f^\#)_{|(0,b_0-a_0]} - (f_{|[a_0,b_0]})^\# \!\!\parallel_{\max} < \frac{\epsilon}{3} \]
as long as $(a_0-a)+(b-b_0) < \delta$, by means of Thm.~\ref{cutoff-thm}. Furthermore, there is an $n \gg 0$ such that
$\parallel\!\! f - f_n \!\!\parallel_{\max} < \frac{\epsilon}{6}$, by Claim 1, hence also
$\parallel\!\! f_{|[a_0,b_0]} - f_{n|[a_0,b_0]} \!\!\parallel_{\max} < \frac{\epsilon}{6}$. Then 
Prop.~\ref{max-norm-bd-prop} implies
\[ \parallel\!\! (f_{|[a_0,b_0]})^\# - (f_{n|[a_0,b_0]})^\# \!\!\parallel_{\max} < \frac{\epsilon}{3}. \]
Next we choose $\delta$ small enough to ensure 
$\parallel\!\! (f_{n|[a_0,b_0]})^\# - (f_n^\#)_{|(0,b_0-a_0]} \!\!\parallel_{\max} < \frac{\epsilon}{3}$; this is 
possible again by Thm.~\ref{cutoff-thm}.

\noindent Finally we choose $a_0, b_0$ such that in addition to $(a_0-a)+(b-b_0) < \delta$, we also have 
$m < b_0-a_0 < b-a$. By expanding $(f^\#)_{|(0,m]} - (f_n^\#)_{|(0,m]}$ to
\[ (f^\#)_{|(0,m]} - (f_{|[a_0,b_0]})^\#_{|(0,m]} + (f_{|[a_0,b_0]})^\#_{|(0,m]} - (f_{n|[a_0,b_0]})^\#_{|(0,m]} + 
   (f_{n|[a_0,b_0]})^\#_{|(0,m]} -(f_n^\#)_{|(0,m]} \]
we get the claim.
\end{proof}

\noindent For all $\epsilon > 0$, Claim 2 implies $f_n^\#(t) > t-\epsilon$ for all $t \in [0,m]$ if $n \gg 0$. 

\noindent Now for $\epsilon$ small enough, consider all integers $e$ with 
$0 \leq e \leq \lfloor mn \rfloor - \lceil n\epsilon \rceil - 1$, and set
$e^\prime := \lfloor mn \rfloor - \lceil n\epsilon \rceil - e - 1$. Then 
$e + 1 + \lceil n\epsilon \rceil \leq \lfloor mn \rfloor$, hence $\frac{e + 1 + \lceil n\epsilon \rceil}{n} \leq m$, and 
we can apply $f_n^\#$ on $\frac{e + 1 + \lceil n\epsilon \rceil}{n}$:
\[ f_n^\#(\frac{e + 1 + \lceil n\epsilon \rceil}{n}) = f_n^\#(\frac{\lfloor mn \rfloor - e^\prime}{n}) > 
   \frac{\lfloor mn \rfloor - e^\prime}{n} - \epsilon \geq 
   \frac{\lfloor mn \rfloor - e^\prime - \lceil n\epsilon \rceil}{n} = \frac{e+1}{n}, \]
for $n \gg 0$. Consequently for each such e the step function $f_n^\#$ has a step of height at least $\frac{e+1}{n}$, 
and these steps can be chosen pairwise distinct.

\noindent On the other hand, $f_n^\#$ is just a reordering of the steps in $f_n$, so $f_n$ has the same property. By 
construction the height of the step of $f_n$ over $\frac{k}{n}$ counts the number of points $(\frac{k}{n}, \frac{l}{n})$
inside $P$. But this means that $n \cdot P \cap \mathbb{N}^2_{\geq 0}$ contains a non-special linear system 
$\mathcal{L}_{D_n}(\lfloor mn \rfloor - \lceil n\epsilon \rceil)$ of dimension $-1$, by Dumnicki's non-specialty 
criterion. Since 
\[ \left| \frac{\lfloor mn \rfloor - \lceil n\epsilon \rceil - nm}{nm} \right| \leq 
   \left| \frac{\lfloor mn \rfloor - nm}{nm} \right| + \frac{\lceil n\epsilon \rceil}{nm} \leq 
   \frac{1}{mn} + \frac{\epsilon}{m} + \frac{1}{nm} \rightarrow 0 \]
for $\epsilon \rightarrow 0$, $n \rightarrow \infty$, the theorem is proven.
 \hfill $\Box$

\section{A lower bound for the Seshadri constant of 10 points in $\mathbb{CP}^2$} \label{LowerBound-sec}

\noindent The following proposition allows to prove the nefness criterion Prop.~\ref{nef-crit}:
\begin{prop} \label{inc-non-spc-prop}
Let $m_1, \ldots, m_r \in \mathbb{N}_{>0}$, let $D_1 \subset D_2 \subset \mathbb{N}^2$ be finite subsets, and let
$p_1, \ldots, p_r \in \mathbb{P}^2$ be points. If $\mathcal{L}_{D_1}(m_1p_1, \ldots, m_rp_r)$ has expected dimension 
$\geq 0$, then also $\mathcal{L}_{D_2}(m_1p_1, \ldots, m_rp_r)$.
\end{prop}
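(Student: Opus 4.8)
The plan is to recast ``having the expected dimension'' as the maximality of the rank of a single linear evaluation map, and then to observe that this maximality, once it holds for the subsystem $\mathcal{L}_{D_1}$, is inherited by $\mathcal{L}_{D_2}$ for free.

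First I would fix notation. Put $N:=\sum_{i=1}^r\binom{m_i+1}{2}$, the number of linear conditions imposed by the multiplicities, and for a finite set $D\subset\mathbb{N}^2$ let $V_D\subset\mathbb{C}[x,y]$ be the $(\#D)$-dimensional space of polynomials $\sum_{(\alpha,\beta)\in D}c_{\alpha,\beta}x^\alpha y^\beta$. Collecting the Taylor coefficients of order $<m_i$ at $p_i$, for $i=1,\dots,r$, gives a linear map $\phi_D\colon V_D\to\mathbb{C}^N$ with $\mathcal{L}_D(m_1p_1,\dots,m_rp_r)=\mathbb{P}(\ker\phi_D)$, so that $\dim\mathcal{L}_D=\#D-1-\operatorname{rank}\phi_D$ (with the convention $\dim\emptyset=-1$). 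The expected dimension is $\max(-1,\#D-1-N)$, and $\mathcal{L}_D$ attains it exactly when $\operatorname{rank}\phi_D=\min(\#D,N)$.

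Now I would use the only point with any content: the inclusion $D_1\subseteq D_2$ gives $V_{D_1}\subseteq V_{D_2}$, and under this inclusion $\phi_{D_2}$ restricts to $\phi_{D_1}$; hence $\operatorname{im}\phi_{D_1}\subseteq\operatorname{im}\phi_{D_2}$. The hypothesis that $\mathcal{L}_{D_1}$ has expected dimension $\geq 0$ forces $\#D_1-1-N\geq 0$ (so $\#D_1>N$) together with $\operatorname{rank}\phi_{D_1}=N$, i.e. $\phi_{D_1}$ is surjective. Then $\phi_{D_2}$ is surjective as well, so $\operatorname{rank}\phi_{D_2}=N=\min(\#D_2,N)$ because $\#D_2\geq\#D_1>N$; thus $\mathcal{L}_{D_2}$ has its expected dimension, and that dimension equals $\#D_2-1-N\geq\#D_1-1-N\geq 0$.

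I do not expect a genuine obstacle: the single idea is that surjectivity of the evaluation map can only be preserved or gained when the monomial support is enlarged, and everything else is elementary dimension bookkeeping. (If one instead reads the statement purely at the level of the combinatorial quantity $\#D-1-N$, the proof degenerates to the inequality $\#D_2\geq\#D_1$.)
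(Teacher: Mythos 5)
Your argument is correct and is essentially the same linear-algebra fact as the paper's, viewed on the dual side: the paper intersects $\mathcal{L}_{D_2}$ with the $\#(D_2\setminus D_1)$ coordinate hyperplanes $\{a_{ij}=0\}$ and uses that each cut drops projective dimension by at most one, concluding by contrapositive, while you note that the evaluation map $\phi_{D_1}$ is a restriction of $\phi_{D_2}$, so surjectivity is inherited, concluding directly. Both hinge only on the inclusion $V_{D_1}\subset V_{D_2}$, so there is no substantive difference.
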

\begin{proof}
The subspace $\mathcal{L}_{D_1}(m_1p_1, \ldots, m_rp_r) \subset \mathcal{L}_{D_2}(m_1p_1, \ldots, m_rp_r)$ is described 
by the intersection of $\mathcal{L}_{D_2}(m_1p_1, \ldots, m_rp_r) \subset \mathbb{P}(\sum_{i+j \leq d} a_{ij}x^iy^j)$ 
with the linear subspace
\[ \{ a_{ij} = 0: (i,j) \in D_2 \setminus D_1 \}. \]
Here, $d$ is chosen such that $D_1 \subset D_2 \subset \{ (i,j) : i+j \leq d \}$. Consequently, 
\[ \dim \mathcal{L}_{D_1}(m_1p_1, \ldots, m_rp_r) \geq \mathcal{L}_{D_2}(m_1p_1, \ldots, m_rp_r) - 
                                                       \#(D_2 \setminus D_1). \]
In particular, $\mathcal{L}_{D_1}(m_1p_1, \ldots, m_rp_r)$ cannot have expected dimension if 
$\mathcal{L}_{D_2}(m_1p_1, \ldots, m_rp_r)$ has not.
\end{proof}

\noindent \textit{Proof of Prop.~\ref{nef-crit}.} The assumption on $P$ implies that there exists a sequence 
$\delta_n \stackrel{>}{\rightarrow}0$, natural numbers $d_n, m_1^{(n)}, \ldots, m_r^{(n)}$ for all $n \in \mathbb{N}$, 
$d_n \rightarrow \infty$ for $n \rightarrow \infty$, and subsets $D_n \subset d_n \cdot P \cap \mathbb{N}^2_{\geq 0}$
such that $\mathcal{L}_{D_n}(m_1^{(n)}, \ldots, m_r^{(n)})$ is non-special of dimension $\geq 0$ and
\[ \left| \frac{m_i^{(n)}}{d_n \cdot m} - 1 \right| = \left| \frac{m_i^{(n)}- d_n m}{d_n m} \right| < \delta_n,\ \  
   i = 1, \ldots, r. \]
Let $m_n := \min \{ m_1^{(n)}, \ldots, m_r^{(n)} \}$. Then Prop.~\ref{inc-non-spc-prop} implies that 
$\mathcal{L}_{d_n}(m_n^r)$ is non-special of dimension $\geq 0$.

\noindent Since $\frac{d_n}{m_n-1}$ and $\frac{d_n}{m_n}$ have the same limit, and for some $i_n \in \{ 1, \ldots, r\}$,
\[ \frac{d_n}{m_n} = \frac{d_n}{m_{i_n}^{(n)}} = \frac{d_n m}{m_{i_n}^{(n)}} \cdot \frac{1}{m} \rightarrow \frac{1}{m},\]
the proposition follows.
\hfill $\Box$

\vspace{0.2cm}

\noindent \textit{Proof of Thm.~\ref{Seshadri-bd-thm}.} In the following diagram, let the points $O,A,B, \ldots , R, S$
be given by the coordinates
\[ \begin{array}{lllll}
   O = (0,0), & A = (1,0), & C = (\frac{9}{13}, 0), & E = (\frac{9}{13}, \frac{4}{13}), & G = (\frac{5}{13}, 0), \\ 
   & & & & \\
             & B = (0,1), & D = (0, \frac{9}{13}), & F = (\frac{4}{13}, \frac{9}{13}), & H = (0, \frac{5}{13}), \\
   & & & & \\
             & I = (\frac{4}{13}, 0), & K = (\frac{7}{13}, \frac{6}{13}), & M = (\frac{6}{13}, \frac{3}{13}), & \\
   & & & & \\
             & J = (0, \frac{4}{13}), & L = (\frac{6}{13}, \frac{7}{13}), & N = (\frac{3}{13}, \frac{6}{13}), & \\
   & & & & \\
             & P = (\frac{9}{26}, \frac{9}{26}), & Q = (\frac{2}{13}, \frac{2}{13}), & 
               R = (\frac{7}{13}, \frac{2}{13}), & S = (\frac{9}{26}, 0).
   \end{array} \]

\begin{center}
\begin{picture}(280,280)(-10,-10)
\put(0,0){\vector(1,0){270}}
\put(0,0){\vector(0,1){270}}
\put(0,260){\line(1,-1){260}}

\put(0,80){\line(1,-1){80}}
\put(180,0){\line(0,1){80}}
\put(0,180){\line(1,0){80}}
\put(100,0){\line(1,1){80}}
\put(0,100){\line(1,1){80}}
\put(100,0){\line(1,3){40}}
\put(0,100){\line(3,1){120}}
\put(60,120){\line(1,-1){60}}
\put(40,40){\line(1,1){50}}
\qbezier[40](140,120)(140,80)(140,40)
\qbezier[40](90,90)(90,45)(90,0)

\put(-2,-8){\small O}
\put(258,-8){\small A}
\put(178,-8){\small C}
\put(98,-8){\small G}
\put(88,-8){\small S}
\put(78,-8){\small I}
\put(-8,256){\small B}
\put(-8,176){\small D}
\put(-8,96){\small H}
\put(-8,76){\small J}
\put(178,84){\small E}
\put(118,144){\small L}
\put(138,124){\small K}
\put(78,184){\small F}
\put(66,116){\small N}
\put(92,92){\small P}
\put(114,66){\small M}
\put(32,36){\small Q}
\put(132,40){\small R}

\put(20,20){\small $P_{1}$}
\put(198,20){\small $P_{2}$}
\put(20,208){\small $P_{3}$}
\put(148,20){\small $P_{4}$}
\put(20,148){\small $P_{5}$}
\put(148,78){\small $P_{6}$}
\put(80,138){\small $P_{7}$}
\put(110,110){\small $P_{8}$}
\put(95,40){\small $P_{9}$}
\put(40,78){\small $P_{10}$}

\end{picture}
\end{center}

\noindent
The diagram is possible since $E, K, L, F$ lie on the line $AB$, the points $I, G, C$ on the line $OA$, the points 
$J, H, D$ on the line $OB$, the point $M$ on the line $GK$, the point $N$ on the line $HL$, the point $P$ on the line 
$NM$ and $Q$ on the line $IJ$. Furthermore, $S$ lies between $I$ and $G$ on $OA$, and $R$ lies on the line $GE$.

\noindent The diagram shows the dissection of the $2$-dimensional simplex $OAB$ into 10 polygons $P_1, \ldots, P_{10}$
by straight lines. The indices of the polygons denote the sequence of dissections. To prove the theorem we apply the 
asymptotic version of Dumnicki's reduction algorithm to this sequence of dissections. That is, we have to show that for
every $m < \frac{4}{13}$ each of the polygons $P_i$, $i = 1, \ldots, 9$, contains asymptotically $(m)$-non-special 
systems of dimension $-1$, and that $P_{10}$ contains asymptotically $(m)$-non-special systems of dimension $\geq 0$.

\noindent By construction the polygons are convex. Hence Thm.~\ref{as-m-non-spec-crit} together with 
Prop.~\ref{mon-re-id-prop} and Prop.~\ref{conc-cont-prop} imply that it is enough to show the following, for every 
polygon $P_i$: The projection of $P_i$ onto the $x$-axis is an interval of length $\geq \frac{4}{13} > m$, and there is
a vertical section of $P_i$ of length $\geq \frac{4}{13} > m$. By symmetry, it is also possible to show these 
inequalities for the projection onto the $y$-axis and a horizontal section. Furthermore, since the lengths are $> m$, it 
will be always possible to add some monomials to $D_{10}^{(n)}$ in $P_{10}^{(n)}$. Prop.~\ref{inc-non-spc-prop} shows 
that this produces $m_{10}^{(n)}$-non-special systems of dimension $\geq 0$ in $P_{10}^{(n)}$, for $n \gg 0$. 

\noindent For the polygons $P_1, \ldots, P_5$ the inequalities are obvious. For the polygon $P_6$ the projection to the 
$x$-axis is the interval $GC$ which has length $\frac{4}{13}$. The vertical section $KR$ has also length $\frac{4}{13}$.
By symmetry, $P_7$ also satisfies the inequalities.

\noindent The projection of $P_8$ onto the $x$-axis is the interval $[\frac{3}{13}, \frac{7}{13}]$ (these are the 
$x$-coordinates of $N$ and $M$), and the vertical section $LM$ has length $\frac{4}{13}$. The projection of $P_9$ onto
the $x$-axis is $[\frac{2}{13}, \frac{6}{13}]$, and the vertical section $PS$ has length $\frac{4}{13}$. By symmetry,
$P_{10}$ also satisfies the necessary inequalities.
\hfill $\Box$

\begin{rem}
Since the bound $\frac{13}{4}$ is rational there might be a pair $(d,m)$ with $\mathcal{L}_d(10^{m+1})$ non-special of 
non-negative dimension and $\frac{d}{m} = \frac{13}{4}$. From such a pair the theorem would follow by Thm.~\ref{Eckl-thm}
without any limit process. But it is difficult to find such a pair: $\mathcal{L}_d(10^{m+1})$ has expected dimension $-1$
up to $m = 92$, and then it is still not clear how to prove non-specialty. For example, the cutting proposed in the proof
of the theorem above might require an even bigger $m$.
\end{rem}



\end{document}